\newtheorem{theorem}{Theorem}[section]
\newtheorem{corollary}[theorem]{Corollary}
\newtheorem{lemma}[theorem]{Lemma}
\newtheorem{proposition}[theorem]{Proposition}
\theoremstyle{definition}
\newtheorem{definition}[theorem]{Definition}
\newtheorem{remark}[theorem]{Remark}
\newtheorem*{claim}{Claim}
\newcommand{\Z}{\mathbb{Z}}
\newcommand{\id}{\operatorname{id}}
\newcommand{\eps}{\varepsilon}
\newcommand{\im}{\operatorname{im}}
\newcommand{\sm}{\setminus}
\DeclareMathOperator{\lk}{lk}
\DeclareMathOperator{\coker}{coker}
\DeclareMathOperator{\Hom}{Hom}
\DeclareMathOperator{\ev}{ev}
\def\op{\operatorname}
\begin{document}

%\begin{frontmatter}
\title{An explicit computation of the Blanchfield pairing for arbitrary links}
\author{Anthony Conway}
\address{Universit\'e de Gen\`eve, Section de math\'ematiques, 2-4 rue du Li\`evre, 1211 Gen\`eve 4, Switzerland}
\email{anthony.conway@unige.ch}
\subjclass[2000]{57M25} 
\maketitle
\begin{abstract}
Given a link $L$, the Blanchfield pairing $\op{Bl}(L)$ is a pairing which is defined on the torsion submodule of the Alexander module of $L$. In some particular cases, namely if $L$ is a boundary link or if the Alexander module of $L$ is torsion, $\op{Bl}(L)$ can be computed explicitly; however no formula is known in general. In this article, we compute the Blanchfield pairing of any link, generalizing the aforementioned results. As a corollary, we obtain a new proof that the Blanchfield pairing is hermitian. Finally, we also obtain short proofs of several properties of $\op{Bl}(L)$.
\end{abstract}

\section{Introduction}
The Blanchfield pairing of a knot $K$ is a nonsingular hermitian pairing $\op{Bl}(K)$ on the Alexander module of $K$~\cite{Blanchfield}. Despite early appearances in high dimensional knot theory \cite{KeartonSimple, KeartonSimple2, Ranicki}, the Blanchfield pairing is nowadays mostly used in the classical dimension. For instance, applications of $\op{Bl}(K)$ in knot concordance include a characterization of algebraic sliceness \cite{KeartonCobordism} and a crucial role in the obstruction theory underlying the solvable filtration of \cite{CochranOrrTeichner}, see also \cite{ChaMinimal, CochranHarveyLeidyKnot, FriedlTeichner, Letsche, Litherland}. Furthermore, $\op{Bl}(K)$  has also served to compute unknotting numbers \cite{BorodzikFriedl0, BorodzikFriedl2, BorodzikFriedl} and in the study of finite type invariants \cite{Moussard}. Finally, the Blanchfield pairing can be computed using Seifert matrices \cite{FriedlPowell,KeartonCobordism,Levine}, is known to determine the Levine-Tristram signatures \cite{BorodzikFriedl} and more generally the S-equivalence class of the knot \cite{Trotter}.

In the case of links, the Blanchfield pairing generalizes to a hermitian pairing $\op{Bl}(L)$ on the torsion submodule of the Alexander module of $L$. Although $\op{Bl}(L)$ is still used to investigate concordance \cite{ChaSymmetric,  CochranHarveyLeidy, CochranOrrNotAll, Duval, KimWhitney, Sheiham}, unlinking numbers and splitting numbers \cite{BorodzikFriedlPowell}, several questions remain: is there a natural definition of algebraic concordance for links and can it be expressed in terms of the Blanchfield pairing? Can one compute unlinking numbers and splitting numbers by generalizing the methods of \cite{BorodzikFriedl0, BorodzikFriedl2, BorodzikFriedl} to links? Does the Blanchfield pairing determine the multivariable signature of \cite{CimasoniFlorens}?

A common issue seems to lie at the root of these unanswered questions: there is no general formula to compute the Blanchfield pairing of a link. More precisely, $\op{Bl}(L)$ can currently only be computed if $L$ is a boundary link \cite{HillmanAlexanderIdeals, CochranOrr} or if the Alexander module of $L$ is torsion \cite{ConwayFriedlToffoli}. Note that these formulas generalize the one component case in orthogonal directions: if $L$ is a boundary link whose Alexander module is torsion, then $L$ must be a knot. The aim of this paper is to provide a general formula for the Blanchfield pairing of any colored link, while generalizing the previous formulas.

\medskip
By a~$\mu$-{\em colored link\/}, we mean an oriented link~$L$ in~$S^3$ whose components are partitioned into~$\mu$ sublinks~$L_1\cup\dots\cup L_\mu$. The exterior~$S^3\sm \nu L$ of $L$ will always be denoted by $X_L$. Moreover, we write~$\Lambda_S:=\mathbb{Z}[t_1^{\pm1},\dots,t_\mu^{\pm 1},(1-t_1)^{-1},\dots,(1-t_\mu)^{-1}]$ for the localization of the ring of Laurent polynomials, and we use~$Q=\mathbb{Q}(t_1,\dots,t_\mu)$ to denote the quotient field of~$\Lambda_S$. Using these notations, the Blanchfield pairing of the colored link $L$ is a hermitian pairing
$$ \op{Bl}(L)\colon TH_1(X_L;\Lambda_S) \times TH_1(X_L;\Lambda_S) \,\,\rightarrow\,\, Q/\Lambda_S,$$
where~$TH_1(X_L;\Lambda_S)$ denotes the torsion submodule of the Alexander module $H_1(X_L;\Lambda_S)$ of~$L$, see Section~\ref{sub:Blanchfield} for details. There are two main reasons for which we use $\Lambda_S$ coefficients instead of the more conventional $\Lambda:=\Z[t_1^{\pm1},\dots,t_\mu^{\pm 1}]$ coefficients. The first is to ensure that the Alexander module $H_1(X_L;\Lambda_S)$ admits a square presentation matrix: the corresponding statement is false over $\Lambda$ \cite{CrowellStrauss, Hillman}. The second is to guarantee that  the Blanchfield pairing is non-degenerate after quotienting $TH_1(X_L;\Lambda_S)$ by the so-called \emph{maximal pseudonull submodule}, see~\cite{Hillman}. Note that for knots, the Alexander module over $\Lambda_S$ is the same as the Alexander module over~$\Lambda$ \cite[Proposition 1.2]{Levine}.

As we mentioned above, $\op{Bl}(L)$ can currently only be computed if $L$ is a boundary link, using boundary Seifert surfaces, or if the Alexander module of $L$ is torsion, using some generalized Seifert surfaces known as C-complexes. Let us briefly recall this latter result. A \emph{C-complex} for a $\mu$-colored link $L$ consists in a collection of Seifert surfaces $F_1, \dots , F_\mu$ for the sublinks $L_1, \dots , L_\mu$ that intersect only pairwise along clasps. Given such a C-complex and a sequence~$\varepsilon=(\varepsilon_1,\varepsilon_2,\dots, \varepsilon_\mu)$ of $\pm 1$'s, there are $2^\mu$ generalized Seifert matrices~$A^\varepsilon$ which extend the usual Seifert matrix \cite{CimasoniPotential, CimasoniFlorens, Cooper}. The associated \emph{C-complex matrix} is the $\Lambda$-valued square matrix
$$H\,\,:=\,\,\sum_\varepsilon \,\prod_{i=1}^\mu (1-t_i^{\varepsilon_i}) \, A^\varepsilon,$$
where the sum is on all sequences~$\varepsilon=(\varepsilon_1,\varepsilon_2,\dots, \varepsilon_\mu)$ of~$\pm 1$'s. In \cite[Theorem 1.1]{ConwayFriedlToffoli}, together with Stefan Friedl and Enrico Toffoli, we showed that if $H_1(X_L;\Lambda_S)$ is~$\Lambda_S$-torsion, then the Blanchfield pairing $\op{Bl}(L)$ is isometric to the pairing
\begin{align}
\label{eq:CFT}
\Lambda_S^n / H^T \Lambda_S^n \times \Lambda_S ^n / H^T \Lambda_S^n &\,\,\rightarrow\,\, Q/\Lambda_S \\
(a,b) &\,\,\mapsto\,\, -a^T H^{-1}\overline{b}, \nonumber
\end{align}
where the size $n$ C-complex matrix $H$ for $L$ was required to arise from a \emph{totally connected} C-complex, i.e. a C-complex $F$ in which each $F_i$ is connected and $F_i \cap F_j  \neq \emptyset $ for all $i\neq j$. Note that (\ref{eq:CFT}) also shows that the Alexander module $H_1(X_L;\Lambda_S)$ admits a square presentation matrix. This fact was already known \cite[Corollary 3.6]{CimasoniFlorens}, but as we mentioned above, it is false if we work over $\Lambda$ \cite{CrowellStrauss, Hillman}.

In general, the Blanchfield pairing is defined on the torsion submodule $TH_1(X_L;\Lambda_S)$ of $H_1(X_L;\Lambda_S)$. To the best of our knowledge, this $\Lambda_S$-module has no reason to admit a square presentation matrix, and thus a direct generalization of (\ref{eq:CFT}) seems out of reach. In order to circumvent this issue, we adapt the definition of the pairing described in (\ref{eq:CFT}) as follows.
Note that for any class~$[x]$ in $\text{Tor}_{\Lambda_S}(\Lambda_S^n/H^T\Lambda_S^n)$, there exists an~$x_0$ in $\Lambda_S^n$ such that~$r x=H^Tx_0$ for some $r \in \Lambda_S$. 
As we shall see in Proposition \ref{prop:form}, the assignment~$(v,w)\mapsto \frac{1}{r\overline{s}}v_0^T H\overline{w_0}$ induces a well-defined pairing
$$ \lambda_H\colon \text{Tor}_{\Lambda_S}\big(\Lambda_S^n/H^T\Lambda_S^n\big)\times \text{Tor}_{\Lambda_S}\big(\Lambda_S^n/H^T\Lambda_S^n\big) \to  Q/\Lambda_S,$$
which recovers minus the pairing described in (\ref{eq:CFT}) when~$\text{det}(H) \neq 0$.  Our main theorem reads as follows.

\begin{theorem}
\label{thm:main}
The Blanchfield pairing of a colored link $L$ is isometric to the pairing $-\lambda_H$, where $H$  is any C-complex matrix for $L$.
\end{theorem}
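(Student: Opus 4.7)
The strategy is to adapt the geometric chain-level argument used in Theorem 1.1 of \cite{ConwayFriedlToffoli}, which established the torsion case, so that it applies when $H$ is an arbitrary (possibly singular) C-complex matrix. Before addressing the geometry, I would first settle the purely algebraic fact asserted in Proposition \ref{prop:form}: that $\lambda_H$ is a well-defined hermitian pairing on $\text{Tor}_{\Lambda_S}(\Lambda_S^n/H^T\Lambda_S^n)$. The two points to check are independence of the witness $x_0$ (if $\Delta x = H^Tx_0 = H^Tx_0'$, then $x_0-x_0' \in \ker H^T$, and one must verify that $v_0^TH\overline{w_0}$ is insensitive to this ambiguity modulo $\Lambda_S$) and independence of the representative of the class $[x]$; along the way, one checks that $\lambda_H$ agrees with minus the pairing of~(\ref{eq:CFT}) whenever $\det(H)\neq 0$.

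Next, I would reduce to the case of a totally connected C-complex. Every colored link admits one, and the C-complex matrices arising from two choices of C-complex are related by a known sequence of stabilization moves (tubing along arcs, handle additions, clasp moves). It is straightforward to verify that $\lambda_H$ transforms correctly under these moves, so it suffices to prove the theorem for a totally connected C-complex $F$. With $F$ fixed, I would reuse the chain-level model of $X_L$ from \cite{ConwayFriedlToffoli}: a regular neighborhood decomposition of $X_L$ and a Mayer--Vietoris argument over the universal abelian cover produce an exact sequence of free $\Lambda_S$-modules in which $H^T$ appears as the critical differential, yielding a square presentation
\[
\Lambda_S^n \xrightarrow{H^T} \Lambda_S^n \to H_1(X_L;\Lambda_S) \to 0.
\]
The key observation, which is automatic in the torsion case but needs re-examination when $\det(H)=0$, is that this presentation remains valid in the presence of a non-torsion summand, so the identification of $TH_1(X_L;\Lambda_S)$ with $\text{Tor}_{\Lambda_S}(\Lambda_S^n/H^T\Lambda_S^n)$ is geometric.

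To extract $\text{Bl}(L)$ from this chain model, I would apply Poincar\'e--Lefschetz duality over $\Lambda_S$ together with the Bockstein associated to the coefficient sequence $0 \to \Lambda_S \to Q \to Q/\Lambda_S \to 0$. A torsion class $[x]\in TH_1(X_L;\Lambda_S)$ lifts to $v_0$ with $\Delta x = H^Tv_0$; tracing through the Bockstein and the duality isomorphism on the chain model produces exactly the value $\tfrac{1}{\Delta^2}v_0^TH\overline{w_0}$, matching $\lambda_H$ up to the expected sign.

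The main obstacle is carrying out the Poincar\'e duality computation in the singular case. In \cite{ConwayFriedlToffoli}, the identification of $\text{Bl}(L)$ with the explicit formula proceeds by inverting $H$, which is unavailable here. Instead, the argument must be phrased entirely at the chain level: only torsion classes admit witnesses $v_0$, and one must show by a diagram chase that the pairing extracted from the Bockstein depends only on the torsion data and lands in $Q/\Lambda_S$ independently of the choices. Verifying that the duality isomorphism takes the same closed form on the torsion submodule as it does generically, and checking naturality under the passage from a general C-complex to a totally connected one, is where I expect the bulk of the technical work to lie.
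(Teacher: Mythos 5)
Your high-level skeleton matches the paper's: establish Proposition~\ref{prop:form} first, reduce to a totally connected C-complex by checking that $\lambda_H$ is invariant under the stabilization moves relating C-complex matrices, and then extract $\op{Bl}(L)$ from Poincar\'e duality and the Bockstein of $0\to\Lambda_S\to Q\to Q/\Lambda_S\to 0$. But the step you describe as ``tracing through the Bockstein and the duality isomorphism on the chain model produces exactly the value $\tfrac{1}{\Delta^2}v_0^TH\overline{w_0}$'' is asserted rather than proved, and it is precisely the content of the theorem. Two things are missing. First, the geometric setting: the paper does not work with a Mayer--Vietoris presentation of $H_1(X_L;\Lambda_S)$ in the $3$-dimensional link exterior; it works with the exterior $W$ of the pushed-in C-complex in $D^4$, because it is only there that the matrix $H$ acquires its meaning as the intersection form on $H_2(W;\Lambda_S)$ and $H^T=\overline{H}$ as the map $i\colon H_2(W;\Lambda_S)\to H_2(W,\partial W;\Lambda_S)$ (\cite[Theorem 1.3]{ConwayFriedlToffoli}). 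A square presentation $\Lambda_S^n\xrightarrow{H^T}\Lambda_S^n\to H_1(X_L;\Lambda_S)\to 0$ alone does not determine $\op{Bl}(L)$; the formula $\tfrac{1}{\Delta^2}x_0^TH\overline{y_0}$ arises as the pairing $\psi$ built from the intersection form of $W$, with $x_0,y_0$ lifted through $i$, and identifying $\op{Bl}(L)$ with $-\psi$ is the whole point.

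Second, the mechanism by which that identification is made is exactly where the failure of torsionness bites, and your proposal defers it (``where I expect the bulk of the technical work to lie''). When $\det H=0$, none of the maps one would like to invert --- the Bockstein, the change-of-coefficient map $H^2(W,\partial W;Q)\to H^2(W;Q)$, Poincar\'e duality restricted to the relevant classes --- is an isomorphism; they are only partially defined on subquotients such as $i_{\Lambda_S}^{W,\partial W}\ker(H^2(W;\Lambda_S)\to H^2(\partial W;Q))$ and $H^{*}(K)/\ker(\delta_K^h)$. The paper's proof consists of (i) an abstract anticommutativity lemma for a $3\times 3$ diagram of cochain complexes (Lemma~\ref{lem:cube}), which in particular produces the sign you wave at; (ii) well-definedness of the restricted $\mathrm{PD}$, $\mathrm{ev}$, $\mathrm{BS}^{-1}$ and $(i_Q^{(W,\partial W),W})^{-1}$ on these subquotients (Lemmas~\ref{lem:PDBienDef}--\ref{lem:EvBienDef}, using $H_1(W;\Lambda_S)=0$ for a totally connected C-complex); (iii) the resulting reduction $\op{Bl}(L)=-\theta$ (Proposition~\ref{prop:ReduceToTheta}); and (iv) the diagram chase with the auxiliary maps $j$ and $k^*$ showing $\theta=\psi$ (Proposition~\ref{prop:ComputeTheta}). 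Without carrying out (i)--(iv), or some equivalent chain-level computation, your argument establishes only the algebraic Proposition~\ref{prop:form} and the reduction to the totally connected case, not Theorem~\ref{thm:main} itself.
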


Theorem \ref{thm:main} generalizes \cite[Theorem 1.1]{ConwayFriedlToffoli} to links whose Alexander module $H_1(X_L;\Lambda_S)$ is not torsion and recovers it if $H_1(X_L;\Lambda_S)$ is torsion. Note also that \cite[Theorem 1.1]{ConwayFriedlToffoli} required~$H$ to arise from a \emph{totally connected} C-complex, whereas Theorem \ref{thm:main} removes this extraneous assumption.  As we mentioned above, Theorem \ref{thm:main} also recovers the computation of $\op{Bl}(L)$ when $L$ is a boundary link, see Theorem \ref{thm:Boundary}. Note that to the best of our knowledge, Theorem \ref{thm:main} was not even  known in the case of oriented links (i.e. $\mu=1$) and the result might be of independent interest.

\medskip
While the Blanchfield pairing of a knot is known to be hermitian and nonsingular, the corresponding statements for links require some more care. The hermitian property of $\op{Bl}(L)$ was sorted out by Powell \cite{Powell}, whereas Hillman \cite{Hillman} quotients $TH_1(X_L;\Lambda_S)$ by its \emph{maximal pseudo null submodule} in order to turn $\op{Bl}(L)$ into a non-degenerate pairing, see also \cite[Section 2.5]{BorodzikFriedlPowell}. Even though we avoid discussing the non-degeneracy of the Blanchfield pairing, we observe that Theorem \ref{thm:main} provides a quick proof that $\op{Bl}(L)$ is hermitian. 
%Namely, using~$\Delta_L^{\text{tor}}$ to denote the first non-vanishing Alexander polynomial of~$L$ over~$\Lambda_S$, we obtain the following corollary.

\begin{corollary}
\label{cor:NonSingular}
The Blanchfield pairing of a link $L$ is hermitian.
%and takes values in ${\Delta_L^\text{tor}}^{-1} \Lambda_S/\Lambda_S$.
 \end{corollary}

Since the definition of the pairing $\lambda_H$ is quite manageable, we also use Theorem \ref{thm:main} to obtain quick proofs regarding the behavior of $\op{Bl}(L)$ under connected sums, disjoint unions, band claspings, mirror images and orientation reversals, see Proposition \ref{prop:ConnectedSum}, Proposition \ref{prop:BandClaspDisjointUnion} and Proposition \ref{prop:Mirror}. 

\begin{remark}
\label{rem:Scott}
If $T:=TH_1(X_L;\Lambda_S)$ admits a square presentation matrix, then the definition of the pairing $\lambda_H$ can be slightly simplified.
%Square presentation matrix need not be Hermitian!
Indeed, in this case the order $\Delta:=\operatorname{Ord}_{\Lambda_S}(T)$ annihilates $T$.
\footnote{Given a $\Lambda_S$-module $M$, for~$k \ge 0$, let~$\Delta^{(k)}(M)$ denote the greatest common divisor of all~$(m-k) \times (m-k)$ minors of an~$m\times n$ presentation matrix of~$M$. 
In other words, $\Delta^{(k)}(M)$ is a generator of the smallest principal ideal containing $E_k(M),$
%of the principal ideal $\operatorname{gcd} E_k(M)$, where the $E_k(M)$ denotes
the $k$-th elementary ideal of $M$.
The order of $M$ is $\operatorname{Ord}_{\Lambda_S}(M)=\Delta^{(0)}(M)$.
If $M$ is torsion and admits a square presentation matrix, then~$\Delta^{(0)}(M)$ annihilates $M$~\cite[Remark 2 on page~31]{HillmanAlexanderIdeals}.
}
Since $T \cong \operatorname{Tors}_{\Lambda_S}(\Lambda_S^n/H^T\Lambda_S^n)$ for any $C$-complex matrix $H$ (as noted in Remark \ref{rem:LambdaSPresentation} below), one therefore has $\lambda_H(v,w)=\frac{1}{\Delta\overline{\Delta}}v_0^TH\overline{w}_0$, for any $v_0,w_0$ with $\Delta v=H^Tv_0$ and $\Delta w=H^Tw_0$.
In other words, one can take $r=s=\Delta$ in the definition of $\lambda_H$.
If, additionally, one chooses a symmetric (up to a sign) representative of~$\Delta$, one can also write~$\Delta^2$ instead of $\Delta\overline{\Delta}$.

Note that $\Delta$ in fact coincides (up to its indeterminacy) with the first non-vanishing Alexander polynomial $\Delta_L^{tor}$ of $L$.
Indeed, setting $r:=\operatorname{rank}_{\Lambda_S}H_1(X_L;\Lambda_S)$, it is known that $\Delta_L^{tor}:=\Delta^{(r)}(H_1(X_L;\Lambda_S))$ equals (up to its indeterminacy) $\Delta:=\operatorname{Ord}_{\Lambda_S}(TH_1(X_L;\Lambda_S))$~\cite[Lemma~4.9]{Turaev}. 
%Let $M$ be an $R$-module. For~$k \ge 0$, let~$\Delta^{(k)}(M)$ denote the greatest common divisor of all~$(m-k) \times (m-k)$ minors of an~$m\times n$ presentation matrix of~$M$. Using $r$ to denote the rank of $M$, it is known that the order of $\text{Tor}_R(M)$ is equal to $\Delta^{(r)}(M)$, see \cite[Lemma 4.9]{Turaev}. If $M$ is presented by a hermitian matrix $H$, the above discussion and the equality $\overline{H}=H^T$ guarantee that $\overline{\Delta}=\Delta$.
\end{remark}
\color{black}

We conclude this introduction by remarking that Theorem \ref{thm:main} is not a trivial corollary of the work carried out in \cite{ConwayFriedlToffoli}. As we shall see in Section \ref{sec:ProofMain}, removing the torsion assumption on the Alexander module leads to several additional algebraic difficulties. 

\medbreak
This paper is organized as follows. Section \ref{sec:Prelim} briefly reviews twisted homology and the definition of the Blanchfield pairing. Section \ref{sec:ProofMain}, which constitutes the core of this paper, deals with the proof of Theorem \ref{thm:main}. Section \ref{sec:Applications} provides the applications of Theorem \ref{thm:main}.

\subsection*{Notation and conventions.}
We use  $p\mapsto \overline{p}$ to denote the usual involution on $\mathbb{Q}(t_1,\dots,t_\mu)$ induced by $\overline{t_i}=t_i^{-1}$. Furthermore, given a subring $R$ of $\mathbb{Q}(t_1,\dots,t_\mu)$ closed under the involution, and given an $R$-module $M$, we use $\overline{M}$ to denote the $R$-module that has the same underlying additive group as $M$, but for which the action by $R$ on $M$ is precomposed with the involution on $R$. Finally, given any ring $R$,  we think of elements in $R^n$ as column vectors.

\subsection*{Acknowledgments.} 

The author wishes to thank David Cimasoni for his constant flow of helpful advice, Stefan Friedl for suggesting the definition of $\lambda_H$ and Mark Powell for inspiring discussions regarding the definition of the Blanchfield pairing. He also wishes to thank an anonymous referee for several insightful comments, especially regarding the (non-)degeneracy of the Blanchfield pairing. 
The author was supported by the NCCR SwissMap funded by the Swiss FNS.

After this paper was published, Scott Stirling communicated the following mistake to me.
The definition of the pairing $\lambda_H$ originally read as $\lambda_H(v,w)=\frac{1}{\Delta^2}v_0H\overline{w_0}$, with $\Delta$ defined as the order of~$TH_1(X_L;\Lambda_S)$.
Unfortunately $\Delta$ does not necessarily annihilate~$TH_1(X_L;\Lambda_S)$ (see Remark~\ref{rem:Scott}) and so this definition did not make sense in general.
In this corrected version of the published paper, $\lambda_H$ has instead been defined as $\lambda_H(v,w)=\frac{1}{r\overline{s}}v_0H\overline{w_0}$.
The remainder of the paper has been amended accordingly and now differs slightly from the published version.
I am grateful to Scott for pointing out this mistake.

\section{Preliminaries}
\label{sec:Prelim}

This section is organized as follows. Subsection \ref{sub:TwistedHomology} briefly reviews the definition of twisted homology, while Subsection \ref{sub:Blanchfield} gives a definition of the Blanchfield pairing. References include \cite[Section 2]{FriedlLeidyNagelPowell} and \cite[Section 2]{FriedlPowell}.

\subsection{Twisted homology}
\label{sub:TwistedHomology}

Let~$X$ be a CW complex, let~$\varphi\colon \pi_1(X) \rightarrow \mathbb{Z}^\mu$ be an epimorphism, and let~$p\colon \widetilde{X} \rightarrow X$ be the regular cover of~$X$ corresponding to the kernel of~$\varphi$. Given a subspace~$ Y \subset X$, we shall write~$\widetilde{Y}:=p^{-1}(Y)$, and view~$C_*(\widetilde{X},\widetilde{Y})$ as a chain-complex of free left modules over the ring $\Lambda=\mathbb{Z}[t_1^{\pm 1},\dots,t_\mu^{\pm 1}]$. Given a commutative ring $R$ and an $(R,\Lambda)$-bimodule~$M$, consider the chain complexes
\begin{align*}
&C_*(X,Y;M):=M \otimes_{\Lambda}C_*(\widetilde{X},\widetilde{Y}) \\
&C^*(X,Y;M):=\Hom_{\Lambda}\Big(\overline{C_*\big(\widetilde{X},\widetilde{Y}\big)},M\Big)
\end{align*}
of~left $R$-modules and denote the corresponding homology $R$-modules by~$H_*(X,Y;M)$ and~$H^*(X,Y;M)$. Taking $R$ to be $\Lambda_S$ and $M$ to be either $\Lambda_S,Q$ or $Q / \Lambda_S$, we may send a cocycle~$f$ in $\Hom_{\Lambda} \left(  \overline{C_*(\widetilde{X},\widetilde{Y})},M \right)$ to the~$\Lambda_S$-linear map defined by~$\sigma\otimes p \mapsto p \cdot \overline{f(\sigma)}$. This yields a well-defined isomorphism of left $\Lambda_S$-modules  
$$ H^i(X,Y;M) \,\,\rightarrow\,\, H_i\Big(\overline{\Hom_{\Lambda_S}\big(C_*(X,Y;\Lambda_S),M\big)}\Big).$$
We also consider the evaluation homomorphism
$$  H_i\Big(\overline{\Hom_{\Lambda_S}(C_*(X,Y;\Lambda_S),M)}\Big) \,\,\rightarrow\,\, \overline{\Hom_{\Lambda_S}(H_i(C_*(X,Y;\Lambda_S)),M)}.$$
The composition of these two homomorphisms gives rise to the left~$\Lambda_S$-linear map 
$$\text{ev} \colon H^i(X,Y;M) \to \overline{\Hom_{\Lambda_S}(H_i(X,Y;\Lambda_S),M)}.$$
We shall also use repeatedly that the short exact sequence $0 \to \Lambda_S \to Q \to Q/\Lambda_S \to 0$ of coefficients gives rise to the long exact sequence 
\begin{equation}
\label{eq:Bockstein}
 \ldots \to H^k(X,Y;Q) \to H^k(X,Y;Q/\Lambda_S) \to H^{k+1}(X,Y;\Lambda_S) \to H^{k+1}(X,Y;Q) \to \ldots
\end{equation}
in cohomology. The connecting homomorphism $H^k(X,Y;Q/\Lambda_S) \to H^{k+1}(X,Y;\Lambda_S)$ is sometimes referred to as the \emph{Bockstein homomorphism} and will be denoted by $\text{BS}$. Finally, if~$X$ is a compact connected oriented $n$-manifold, there are  Poincar\'e duality isomorphisms $H_i(X,\partial X;M) \cong H^{n-i}(X;M)$ and $H_i(X;M)  \cong H^{n-i}(X,\partial X;M).$

\subsection{The Blanchfield pairing}
\label{sub:Blanchfield}

Let $L=L_1 \cup \dots \cup L_\mu$ be a colored link and denote its exterior by $X_L$. Identifying $\mathbb{Z}^\mu$ with the free abelian group on $t_1,\dots,t_\mu$, the epimorphism~$ \pi_1(X_L) \rightarrow \mathbb{Z}^\mu$ given by $\gamma \mapsto t_1^{\text{lk}(\gamma,L_1)}\cdots t_\mu^{\text{lk}(\gamma,L_\mu)}$ gives rise to the \textit{Alexander module} $H_1(X_L;\Lambda_S)$ of $L$.
Denote by~$\Omega$ the composition
\begin{align*}
TH_1(X_L;\Lambda_S) \,
 & \xrightarrow{(i)} TH_1(X_L,\partial X_L;\Lambda_S)  \\
 & \xrightarrow{(ii)}\, \ker(H^2(X_L;\Lambda_S) \to H^2(X_L;Q))  \\
  & \xrightarrow{(iii)}   \frac{H^1(X_L;Q/\Lambda_S)}{\ker( H^1(X_L;Q/\Lambda_S) \stackrel{\text{BS}}{\to} H^2(X_L;\Lambda_S) )} \\
 & \xrightarrow{(iv)}  \overline{\Hom_{\Lambda_S}(TH_1(X_L;\Lambda_S),Q/\Lambda_S)}
\end{align*}
of the four $\Lambda_S$-homomorphisms defined as follows. The inclusion induced map $H_1(X_L;\Lambda_S) \to H_1(X_L,\partial X_L;\Lambda_S)$ is an isomorphism~\cite[Lemma 2.2]{ConwayFriedlToffoli} and leads to ($i$). Since $H^2(X_L;Q)$ is a $Q$-vector space, torsion elements in $H^2(X_L;\Lambda_S)$ are mapped to zero in $H^2(X_L;Q)$, and thus Poincar\'e duality induces ($ii$). The long exact sequence displayed in (\ref{eq:Bockstein}) implies that the Bockstein homomorphism leads to the homomorphism labeled $(iii)$. Indeed, by exactness $\ker(H^2(X_L;\Lambda_S) \to H^2(X_L;Q))$ is equal to $\im(BS) \cong \frac{H^1(X_L;Q/\Lambda_S)}{\ker(\text{BS})}$. To deal with $(iv)$, we must show that elements of $\ker(BS)$ evaluate to zero on elements of $TH_1(X_L;\Lambda_S)$. Since $\ker(BS)=\im(H^1(X_L;Q) \to H^1(X_L;Q/\Lambda_S))$, elements of $\ker(BS)$ are represented by cocycles which factor through $Q$-valued homomorphisms. Since $Q$ is a field, these latter cocycles vanish on torsion elements, and thus so do the elements of $\ker(BS)$.

%(1),(2),(3) are isoms  
%the first ker is equal to TH^2...
\begin{definition}
\label{def:BlanchfieldLinks}
The \textit{Blanchfield pairing} of a colored link~$L$ is the pairing
$$ \operatorname{Bl}(L) \colon  TH_1(X_L;\Lambda_S) \times  TH_1(X_L;\Lambda_S) \rightarrow Q/\Lambda_S ~$$
defined by~$\operatorname{Bl}(L)(a,b)=\Omega(b)(a).$
\end{definition}

It follows from the definitions that the Blanchfield pairing is sesquilinear over~$\Lambda_S$, in the sense that~$\operatorname{Bl}(L)(pa,qb)=p\operatorname{Bl}(L)(a,b)\overline{q}$ for any~$a,b$ in $H_1(X_L;\Lambda_S)$ and any~$p,q$ in $\Lambda_S.$

\section{Proof of Theorem \ref{thm:main}}
\label{sec:ProofMain}

We start by fixing some notation. As we mentioned in the introduction, a C-complex for a $\mu$-colored link $L$ consists in a collection of Seifert surfaces $F_1, \dots , F_\mu$ for the sublinks $L_1, \dots , L_\mu$ that intersect only pairwise along clasps. Pushing a C-complex into the $4$-ball~$D^4$ leads to properly embedded surfaces which only intersect transversally in double points. Let~$W$ be the exterior of such a pushed-in C-complex in~$D^4$, i.e. $W$ is the complement in~$D^4$ of a tubular neighborhood of the pushed-in C-complex, see \cite[Section 3]{ConwayFriedlToffoli} for details. We wish to study the cochain complexes of $\partial W, W$ and $(W,\partial W)$ with coefficients in $\Lambda_S, Q$ and~$Q/\Lambda_S$. These 9 cochain complexes fit in a commutative diagram whose columns and rows are exact. 

\medskip
Keeping this motivating example in mind, we make a short detour which shall only involve homological algebra. More precisely, given a commutative ring $R$, we shall consider the following commutative diagram of cochain complexes of $R$-modules whose columns and rows are assumed to be exact:
\begin{equation}
\label{eq:Nine}
\xymatrix@R0.5cm{
  &0 \ar[d]&  0\ar[d]&  0\ar[d]&  \\
0 \ar[r] &A\ar[d] \ar[r]& B \ar[d]_{v_B}\ar[r]^{h_B}& C \ar[r]\ar[d]& 0 \\
0 \ar[r] &D \ar[r]^{h_D}\ar[d]_{v_D}& E \ar[r]\ar[d]_{v_E}& F \ar[r]\ar[d]& 0 \\
0 \ar[r] &H \ar[r]^{h_H}\ar[d]& J \ar[r]^{h_J}\ar[d]& K \ar[r]\ar[d]& 0  \\
  &0 &  0&  0.& 
}
\end{equation}
We shall write $H^*(D) \to H^*(J)$ for the homomorphism induced by any composition of the cochain maps from $D$ to $J$. Also, $H^*(J) \to H^{*+1}(C)$ will denote the composition of the connecting map from $H^*(J)$ to $H^{*+1}(B)$ with the homomorphism induced by the cochain map from $B$ to $C$. Alternatively, the latter map can also be described as the composition of the homomorphism induced by the cochain map from $J$ to $K$ with the connecting homomorphism $\delta_K^v \colon H^*(K) \to H^{*+1}(C)$. Furthermore, $\delta_K^h$ will denote the connecting homomorphism from~$H^*(K)$ to $H^{*+1}(H)$. Note that these connecting maps are of degree $+1$ since we are working with \emph{co}chain complexes. Finally, we shall use the same notation for cochain maps as for the homomorphisms they induce on cohomology.

We now argue that there is a well-defined homomorphism from $v_D \ker( H^*(D)  \to H^*(J))$ to $H^{*-1}(K)/\ker(\delta_K^h)$, which we shall denote by $(\delta_K^h)^{-1}$. Indeed, if $[x]$ belongs to $\ker( H^*(D)  \to H^*(J))$, the definition of the latter kernel implies that $(h_H \circ v_D)([x])=0$. Using the long exact sequence in cohomology induced by the bottom row of (\ref{eq:Nine}), there is a $[k]$ in $ H^{*-1}(K)$ which satisfies $\delta_K^h([k])=v_D([x])$. Define $(\delta_K^h)^{-1}(v_D([x]))$ as the class of $[k]$ in $H^{*-1}(K)/\ker(\delta_K^h)$. We now check that $(\delta_K^h)^{-1}$ is well-defined. If $[k]$ and $[k']$ are elements of $H^{*-1}(K)$ which satisfy $\delta_K^h([k])=v_D([x])=\delta_K^h([k'])$, then $[k]-[k']$ lies in $\ker(\delta_K^h)$. Consequently, the classes of $[k]$ and $[k']$ agree in the quotient $H^*(B)/\ker(\delta_K^h)$, as desired. 

Similarly, we shall argue that there is a well-defined homomorphism from $h_D \ker(  H^*(D)  \to  H^*(J)) $ to $\frac{ H^*(B)}{\ker(v_B)}$, which we shall denote by $v_B^{-1}$. Indeed, if $[x]$ belongs to $\ker(  H^*(D)  \to  H^*(J))$, the definition of the latter kernel implies that $(v_E \circ h_D)([x])=0$. Using the long exact sequence in cohomology induced by the middle column of (\ref{eq:Nine}), there is a $[b]$ in $ H^*(B)$ which satisfies $v_B([b])=h_D([x])$. Define $v_B^{-1}(h_D([x]))$ as the class of $[b]$ in $\frac{ H^*(B)}{\ker(v_B)}$. We now check that $v_B^{-1}$ is well-defined. If $[b]$ and $[b']$ are elements of $H^{*-1}(B)$ which satisfy $v_B([b])=h_D([x])=v_B([b'])$, then $[b]-[b']$ lies in $\ker(v_B)$. Consequently, the classes of $[b]$ and~$[b']$ agree in the quotient $H^*(B)/\ker(v_B)$, as desired.

Finally, we claim that $\delta_K^v$ induces a well-defined map from $\frac{ H^{*-1}(K)}{\ker(\delta_K^h)} \to \frac{H^*(C)}{\im( H^{*-1}(J) \to  H^*(C))}$. To see this, we must show that if $[k]$ lies in the kernel of $\delta_K^h$, then $\delta_K^v([k])$ belongs to~$\text{im}:=\im( H^{*-1}(J) \stackrel{h_J}{\to}  H^{*-1}(K) \stackrel{\delta_K^v}{\to}  H^*(C))$. By exactness of the bottom row of (\ref{eq:Nine}), we have $\ker(\delta_K^h)=\im(h_J)$. Consequently $[k]$ lies in $\im(h_J)$ and thus $\delta_K^v([k])$ belongs to $\text{im}$, proving the claim.

We delay the proof of the following lemma to the appendix. Note that the statement of this lemma was inspired by \cite[Lemma 4.4]{BargeLannesLatourVogel}.

\begin{lemma}
\label{lem:cube}
Given nine cochain complexes as in (\ref{eq:Nine}), the diagram below anticommutes:
\begin{equation}
\label{eq:9Lemma}
 \xymatrix@R0.5cm{
\ker(  H^*(D)  \to  H^*(J)) \ar[r]^{  v_D}\ar[d]^{h_D} & v_D \ker(  H^*(D)  \to  H^*(J)) \ar[d]^{(\delta_K^h)^{-1}}\\
%%%%%%%%%%%%%%%
h_D \ker(  H^*(D)  \to  H^*(J))\ar[d]^{v_B^{-1}} & \frac{H^{*-1}(K)}{\ker(\delta_K^h)}\ar[d]^{\delta_K^v} \\
\frac{ H^*(B)}{\ker(v_B)} \ar[r]^{h_B}& \frac{ H^*(C)}{\im(H^{*-1}(J) \to  H^*(C))}.
}
\end{equation}
\end{lemma}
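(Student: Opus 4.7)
The plan is to establish the anticommutativity by a direct cochain-level diagram chase in which one crucial lift is chosen early so that both branches of (\ref{eq:9Lemma}) can be read off the \emph{same} equation at the end.

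First I would pick a cocycle $x \in D^*$ representing a class in $\ker(H^*(D) \to H^*(J))$. Commutativity of the central square gives $h_H v_D(x) = v_E h_D(x)$, and by hypothesis this cocycle is a coboundary in $J^*$, so there exists $j \in J^{*-1}$ with $dj = v_E h_D(x) = h_H v_D(x)$. Then, using surjectivity of $v_E$ (exactness of the middle column), I fix once and for all a lift $\tilde{\jmath} \in E^{*-1}$ with $v_E(\tilde{\jmath}) = j$. This single choice will drive both sides of the chase and is what allows the comparison at the end.

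For the right-then-down branch, set $k := h_J(j) \in K^{*-1}$. The identity $dk = h_J(dj) = h_J h_H v_D(x) = 0$ (using exactness of the bottom row) shows $k$ is a cocycle, and unwinding the definition of $\delta_K^h$ gives $\delta_K^h([k]) = v_D([x])$, so $[k]$ represents $(\delta_K^h)^{-1}(v_D([x]))$. To evaluate $\delta_K^v([k])$ I would lift $k$ to $F^{*-1}$ by pushing $\tilde{\jmath}$ through the projection $E \to F$; the resulting connecting-map output is then the class in $H^*(C)$ of the preimage of $[d\tilde{\jmath}] \in F^*$ under the inclusion $C \hookrightarrow F$.

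For the down-then-right branch, the relation $v_E(h_D(x) - d\tilde{\jmath}) = v_E h_D(x) - dj = 0$ combined with exactness of the middle column produces a unique $b \in B^*$ with $v_B(b) = h_D(x) - d\tilde{\jmath}$. Injectivity of $v_B$ forces $b$ to be a cocycle, so $[b]$ represents $v_B^{-1}(h_D([x]))$, and the branch terminates at $h_B([b]) \in H^*(C)$. The two branches now feed into a single identity: rearranging yields $d\tilde{\jmath} = h_D(x) - v_B(b)$, which modulo $h_D(D^*)$ reads $[d\tilde{\jmath}] = -[v_B(b)]$ in $F^*$. Since $B \to F$ factors through $C$ and sends $b$ to the image of $h_B(b)$, the $C^*$-preimage of $[d\tilde{\jmath}]$ is $-h_B(b)$, so $\delta_K^v([k]) = -h_B([b])$, which is the anticommutativity claim.

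The main obstacle I anticipate is bookkeeping rather than depth: one must check that different choices of the representative $x$, of $j$, and of $\tilde{\jmath}$ only alter $[k]$ by an element of $\ker(\delta_K^h)$, alter $[b]$ by an element of $\ker(v_B)$, and alter the terminal class by an element of $\im(H^{*-1}(J) \to H^*(C))$. Each of these changes has to be tracked through the construction so that the final equality lands in the correct quotient groups displayed in (\ref{eq:9Lemma}). Once this is verified, the chase above concludes the proof.
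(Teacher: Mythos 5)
Your proposal is correct and is essentially the paper's own proof: the same cochain-level chase driven by a single primitive in $E^{*-1}$ (your $\tilde\jmath$, the paper's $e$ with $\delta_E(e)=h_D(x)-v_B(b)$), using $h_J(v_E(\tilde\jmath))$ as the representative of $(\delta_K^h)^{-1}(v_D[x])$ and $h_E(\tilde\jmath)$ as the lift to $F$ computing $\delta_K^v$, yielding $\delta_K^v([k])=-h_B([b])$; the only difference is that the paper fixes $[b]$ first and then finds the primitive, while you fix the primitive first and derive $b$. The final bookkeeping you worry about is not needed, since the well-definedness of $(\delta_K^h)^{-1}$, $v_B^{-1}$ and the induced $\delta_K^v$ is established before the lemma, so verifying the equality for one choice of representatives suffices.
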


This concludes our algebraic detour and we now return to topological matters, namely to the nine cochain complexes which arose when we considered the exterior $W$ of a pushed-in C-complex in the $4$-ball.

\medskip
Use $i^W_{\Lambda_S , Q}$ to denote the homomorphism from $H^2(W;\Lambda_S)$ to $H^2(W;Q)$ induced by the inclusion of $\Lambda_S$ into $Q$. We also use $i_{\Lambda_S}^{W,\partial W}$ to denote the homomorphism from $H^2(W;\Lambda_S)$ to $H^2(\partial W;\Lambda_S)$. More generally, we will often implicitly follow this notational scheme: for instance $i_{Q/\Lambda_S}^{(W,\partial W),W}$ will denote the map from $H^2(W,\partial W;Q/ \Lambda_S )$ to $H^2(W;Q/ \Lambda_S)$.

Since $\text{BS}$ plays the role of the boundary map $\delta_K^h$ in our algebraic detour, there is a well-defined map $\text{BS}^{-1}$ from $i_{\Lambda_S}^{W,\partial W}\ker(H^2(W;\Lambda_S) \to H^2(\partial W;Q))$ to $\frac{H^1(\partial W;Q/ \Lambda_S)}{\ker(H^1(\partial W;Q/ \Lambda_S) \stackrel{\text{BS}}{\to} H^2(\partial W;\Lambda_S)}$. Similarly, translating the role of $v_B$ into this setting, there is a well-defined map $(i^{(W,\partial W),W}_Q)^{-1}$ from $i^W_{\Lambda_S,Q} \ker(H^2(W;\Lambda_S) \to H^2(\partial W;Q))$ to $\frac{H^2(W,\partial W;Q)}{\ker(H^2(W,\partial W;Q) \to H^2(W;Q))}$. Furthermore, we shall denote by $\delta_{Q/\Lambda_S}$ the boundary map which arises in the long sequence of the pair $(W,\partial W)$ with $Q/\Lambda_S$ coefficients. 

Applying Lemma \ref{lem:cube} to the cochain complexes of $\partial W$, of $W$ and of $(W,\partial W)$ with coefficients in $\Lambda_S, Q$ and $Q / \Lambda_S$ immediately yields the following lemma.

\begin{lemma}
\label{lem:BigDiagram}
Let $W$ be the exterior of a pushed-in C-complex in $D^4$. 
The following diagram anticommutes:
$$ \xymatrix@R0.6cm@C-1cm{
\ker(H^2(W;\Lambda_S) \to H^2(\partial W;Q)) \ar[rrrrr]^{i_{\Lambda_S}^{W,\partial W}} \ar[d]^{i^W_{\Lambda_S,Q}}
 &&&&& i_{\Lambda_S}^{W,\partial W} \ker(H^2(W;\Lambda_S) \to H^2(\partial W;Q)) \ar[d]^{BS^{-1}} && \\
%%%%%%%%%%%%%%%%%%%%%%%%%%%%
i^W_{\Lambda_S,Q} \ker(H^2(W;\Lambda_S) \to H^2(\partial W;Q)) \ar[d]^{(i_Q^{(W,\partial W),W})^{-1}}
&&&&& \frac{H^1(\partial W;Q/\Lambda_S)}{\ker(H^1(\partial W;Q/\Lambda_S) \stackrel{BS}{\to} H^2(\partial W;\Lambda_S)) }  \ar[d]^{\delta_{Q/\Lambda_S}} &&   \\ 
 %%%%%%%%%%%%%%%%%%%%%%%%%%%%%%%
\frac{H^2(W,\partial W;Q)}{\ker(H^2(W,\partial W;Q) \to H^2(W;Q))}  \ar[rrrrr]^{i^{(W,\partial W)}_{Q,Q/ \Lambda_S}}
 &&&&& \frac{H^2(W,\partial W;Q/\Lambda_S)}{\im(H^1(\partial W;Q) \to H^2(W,\partial W;Q/\Lambda_S))}.  &&    
}$$
\end{lemma}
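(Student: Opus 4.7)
The claim is that the lemma follows as a direct application of Lemma \ref{lem:cube} to the obvious $3\times 3$ grid of cochain complexes obtained from the pair $(W,\partial W)$ and the coefficient short exact sequence $0\to\Lambda_S\to Q\to Q/\Lambda_S\to 0$. My plan has three steps: identify the nine entries of (\ref{eq:Nine}) in our topological setting, verify the exactness hypotheses of Lemma \ref{lem:cube}, and match the maps.

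For the identification, I would index the rows of (\ref{eq:Nine}) by $(W,\partial W)$, $W$, $\partial W$ and the columns by $\Lambda_S$, $Q$, $Q/\Lambda_S$, in that order. With this convention $D=C^*(W;\Lambda_S)$, $J=C^*(\partial W;Q)$, $H=C^*(\partial W;\Lambda_S)$, $B=C^*(W,\partial W;Q)$, $C=C^*(W,\partial W;Q/\Lambda_S)$, $K=C^*(\partial W;Q/\Lambda_S)$, and so on, so that $\ker(H^*(D)\to H^*(J))$ becomes precisely $\ker(H^2(W;\Lambda_S)\to H^2(\partial W;Q))$. The rows of (\ref{eq:Nine}) are then short exact because the coefficient sequence $0\to\Lambda_S\to Q\to Q/\Lambda_S\to 0$ remains short exact after applying $\Hom_\Lambda(\overline{C_*(\widetilde{X},\widetilde{Y})},-)$ to the relevant complex of free $\Lambda$-modules. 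The columns are short exact because the cellular sequence $0\to C_*(\widetilde{\partial W})\to C_*(\widetilde{W})\to C_*(\widetilde{W},\widetilde{\partial W})\to 0$ splits in each degree (the relative chains being freely generated by the cells not lying in $\partial W$), so dualizing with any of $\Lambda_S$, $Q$, $Q/\Lambda_S$ preserves exactness.

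The remaining step is to check that each map in the square (\ref{eq:9Lemma}) becomes its topological counterpart in Lemma \ref{lem:BigDiagram}: $v_D$ induces $i_{\Lambda_S}^{W,\partial W}$, $h_D$ induces $i^W_{\Lambda_S,Q}$, the Bockstein $\delta_K^h$ along the bottom row is $\text{BS}$, the pair connecting map $\delta_K^v$ along the right column is $\delta_{Q/\Lambda_S}$, while $v_B^{-1}$ and $h_B$ match $(i_Q^{(W,\partial W),W})^{-1}$ and $i^{(W,\partial W)}_{Q,Q/\Lambda_S}$. Once these matches are in place, the anticommutativity asserted in Lemma \ref{lem:BigDiagram} is literally that of (\ref{eq:9Lemma}). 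The proof is essentially bookkeeping; the only thing demanding care is ensuring the two connecting homomorphisms associated with the bottom row and the right column are correctly identified with $\text{BS}$ and $\delta_{Q/\Lambda_S}$ respectively, so that the two sides of the square of Lemma \ref{lem:cube} are assembled in the right order.
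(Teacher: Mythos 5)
Your proposal is correct and is essentially the paper's own proof: the paper deduces Lemma \ref{lem:BigDiagram} by applying Lemma \ref{lem:cube} to exactly this $3\times 3$ grid (rows $(W,\partial W)$, $W$, $\partial W$; columns $\Lambda_S$, $Q$, $Q/\Lambda_S$), and your identifications of $D$, $E$, $H$, $J$, $K$, $B$, $C$ and of the maps ($v_D=i_{\Lambda_S}^{W,\partial W}$, $h_D=i^W_{\Lambda_S,Q}$, $\delta_K^h=\mathrm{BS}$, $\delta_K^v=\delta_{Q/\Lambda_S}$, etc.), together with the exactness checks (freeness of $C_*(\widetilde W,\widetilde{\partial W})$ for the coefficient rows, degreewise splitting of the pair sequence for the columns), match the paper's argument.
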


Recall from Section \ref{sub:TwistedHomology} that Poincar\'e duality provides isomorphisms from $H_1(\partial W;\Lambda_S)$ to $H^2(\partial W;\Lambda_S)$ and from $H_2(W,\partial W;\Lambda_S)$ to $H^2(W;\Lambda_S)$. Both these maps shall be denoted by~$\text{PD}$. Furthermore, we use $\partial$ to denote the map from $H_2(W,\partial W;\Lambda_S)$ to $H_1(\partial W;\Lambda_S)$ which arises in the long exact sequence of the pair $(W,\partial W)$. We shall abbreviate $TH_1(\partial W;\Lambda_S)$ by~$T$. Finally, we recall that a C-complex $F = F_1 \cup \ldots \cup F_\mu$ is \emph{totally connected} if each $F_i$ is connected and $F_i \cap F_j  \neq \emptyset $ for all $i\neq j$.

\begin{lemma}
\label{lem:PDBienDef}
Let $W$ be the exterior of a pushed-in C-complex in $D^4$.
\begin{enumerate}
\item Poincar\'e duality restricts to a well-defined map $ \partial^{-1}(T) \to  \ker(H^2(W;\Lambda_S) \to H^2(\partial W;Q)).$
\item If the C-complex is totally connected, then Poincar\'e duality restricts to a well-defined map  $ T  \to   i_{\Lambda_S}^{W,\partial W} \ker(H^2(W;\Lambda_S) \to H^2(\partial W;Q)).$ 
\end{enumerate}
\end{lemma}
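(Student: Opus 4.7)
The plan is to exploit naturality of Poincaré duality combined with the long exact sequence of the pair $(W,\partial W)$ in cohomology. The key input is the identity $i_{\Lambda_S}^{W,\partial W} \circ \PD = \pm\, \PD \circ \partial$, where on the left $\PD$ denotes Poincaré--Lefschetz duality $H_2(W,\partial W;\Lambda_S) \to H^2(W;\Lambda_S)$ for the $4$-manifold $W$, and on the right $\PD$ is Poincaré duality $H_1(\partial W;\Lambda_S) \to H^2(\partial W;\Lambda_S)$ on the closed $3$-manifold $\partial W$. This commutative square is standard and follows from naturality of cap products.

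For part~(1), given $x \in \partial^{-1}(T)$, the element $\partial x$ is torsion in $H_1(\partial W;\Lambda_S)$ by definition. Commutativity then yields that $i_{\Lambda_S}^{W,\partial W}(\PD(x)) = \pm \PD(\partial x)$ is torsion in $H^2(\partial W;\Lambda_S)$. Since $Q$ is a field, the change of coefficients $H^2(\partial W;\Lambda_S) \to H^2(\partial W;Q)$ kills torsion, so $\PD(x)$ lies in $\ker(H^2(W;\Lambda_S) \to H^2(\partial W;Q))$, as required.

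For part~(2), fix $y \in T$ and ask whether $\PD(y) \in H^2(\partial W;\Lambda_S)$ lies in $\im(i_{\Lambda_S}^{W,\partial W})$. By the long exact sequence of $(W,\partial W)$ in cohomology, this is equivalent to the vanishing of the connecting map $\delta(\PD(y))$ in $H^3(W,\partial W;\Lambda_S)$. Under the Poincaré duality isomorphism $H^3(W,\partial W;\Lambda_S) \cong H_1(W;\Lambda_S)$, the map $\delta$ corresponds (up to sign) to the inclusion-induced map $j_* \colon H_1(\partial W;\Lambda_S) \to H_1(W;\Lambda_S)$. Hence the factorization reduces to the claim $j_*(T) = 0$. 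Under the totally connected hypothesis, this follows from the fact, established in~\cite{ConwayFriedlToffoli}, that $H_1(W;\Lambda_S)$ is $\Lambda_S$-torsion-free. Once a lift $z \in H^2(W;\Lambda_S)$ of $\PD(y)$ is secured, the further condition $z \in \ker(H^2(W;\Lambda_S) \to H^2(\partial W;Q))$ is automatic, since the image of $z$ in $H^2(\partial W;\Lambda_S)$ is $\PD(y)$, which is torsion and therefore dies in $H^2(\partial W;Q)$ by the argument of part~(1).

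The main obstacle is justifying the $\Lambda_S$-torsion-freeness of $H_1(W;\Lambda_S)$ under the total-connectedness hypothesis: this is the sole place where the extra geometric assumption is actually used. I would expect to extract this from the explicit Mayer--Vietoris-style computation of $H_*(W;\Lambda_S)$ carried out in~\cite{ConwayFriedlToffoli}; without the hypothesis $H_1(W;\Lambda_S)$ can carry torsion, and only the weaker conclusion of part~(1) survives.
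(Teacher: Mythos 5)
Your part~(1) is the same argument as the paper's: the square $i_{\Lambda_S}^{W,\partial W}\circ \PD = \pm\,\PD\circ\partial$ plus the fact that torsion dies in the $Q$-vector space $H^2(\partial W;Q)$. For part~(2) you take a dual but equally valid route. The paper stays on the homology side: by \cite[Corollary 3.2]{ConwayFriedlToffoli} total connectedness gives $H_1(W;\Lambda_S)=0$, hence $\partial\colon H_2(W,\partial W;\Lambda_S)\to H_1(\partial W;\Lambda_S)$ is surjective, so any $a\in T$ lifts to some $x\in\partial^{-1}(T)$, and part~(1) applied to $x$ produces the required element $d=\PD(x)$ with $i_{\Lambda_S}^{W,\partial W}(d)=\PD(a)$. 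You instead lift $\PD(y)$ through $i_{\Lambda_S}^{W,\partial W}$ using the cohomology long exact sequence of $(W,\partial W)$, identifying the obstruction $\delta(\PD(y))\in H^3(W,\partial W;\Lambda_S)$ with $j_*(y)\in H_1(W;\Lambda_S)$ via the Poincar\'e--Lefschetz duality ladder; this is correct, and your observation that the lift automatically lands in $\ker(H^2(W;\Lambda_S)\to H^2(\partial W;Q))$ is also right. What your version buys is the (marginally) weaker hypothesis that $H_1(W;\Lambda_S)$ be $\Lambda_S$-torsion-free rather than zero; what it costs is the extra naturality square of the duality ladder (connecting map versus inclusion-induced map), which the paper avoids by reusing part~(1). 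The point you flag as the ``main obstacle'' is not one: the fact needed is exactly \cite[Corollary 3.2]{ConwayFriedlToffoli}, which gives $H_1(W;\Lambda_S)=0$ for a pushed-in totally connected C-complex, stronger than the torsion-freeness you require, so your argument closes with the same citation the paper uses.
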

\begin{proof}
In order to prove both statements, we shall consider the following commutative diagram:
\begin{equation}
\label{eq:DiagPDBiendef}
\xymatrix{
H_2(W,\partial W;\Lambda_S) \ar[r]^{\text{PD}} \ar[d]^{\partial}& H^2(W;\Lambda_S) \ar[r]^{i^W_{\Lambda_S,Q}}\ar[d]^{i_{\Lambda_S}^{W,\partial W}}& H^2(W;Q) \ar[d]^{i_Q^{W,\partial W}} \\
H_1(\partial W;\Lambda_S) \ar[r]^{\text{PD}} & H^2(\partial W;\Lambda_S) \ar[r]^{i^{\partial W}_{\Lambda_S,Q}}& H^2(\partial W;Q). \\
}
\end{equation}
We start with the first assertion. Given $x$ in $\partial^{-1}(T)$, the goal is to show that $PD(x)$ lies in $\ker(H^2(W;\Lambda_S) \to H^2(\partial W;Q))$ or in other words, we wish to show that $(i^{\partial W}_{\Lambda_S,Q} \circ  i_{\Lambda_S}^{W,\partial W} \circ PD)(x)$ vanishes. Since $\partial (x)$ is a torsion element of $H_1(\partial W;\Lambda_S)$, there exists a non-zero $\lambda$ in $\Lambda_S$ for which $\lambda \partial (x)=0$. The commutativity of (\ref{eq:DiagPDBiendef}) now implies that $\lambda (i^{\partial W}_{\Lambda_S,Q} \circ i_{\Lambda_S}^{W,\partial W} \circ PD)(x)=(i^{\partial W}_{\Lambda_S,Q} \circ PD)(\lambda \partial( x))=0$. Since $H^2(W;Q)$ is a vector space and $\lambda$ is non-zero, the first claim is proved.

Next, we deal with the second claim. Given $a$ in $T$, we must find a $d$ in $\ker(H^2(W;\Lambda_S) \to H^2(\partial W;Q))$ such that $i_{\Lambda_S}^{W,\partial W}(d)=PD(a)$. Since we now assume the C-complex to be totally connected, \cite[Corollary 3.2]{ConwayFriedlToffoli} implies that $H_1(W;\Lambda_S)=0$ and thus $\partial$ is surjective. Consequently, there exists an $x$ in $H_2(W,\partial W;\Lambda_S)$ for which $\partial (x)=a$. Since $a$ is torsion, $x$ is actually in $\partial^{-1}(T)$ and so the first claim implies that $PD(x)$ lies in $\ker(H^2(W;\Lambda_S) \to H^2(\partial W;Q))$. Thus we set $d:=PD(x)$ and observe that the commutativity of (\ref{eq:DiagPDBiendef}) implies $PD(a)=PD(\partial(x))=i_{\Lambda_S}^{W,\partial W}(PD(x))=i_{\Lambda_S}^{W,\partial W} (d)$, as desired.
\end{proof}

Next, we deal with the evaluation maps which were described in Section \ref{sub:TwistedHomology}. More precisely we shall consider the map from $H^2(W,\partial W;Q)$ to $ \overline{ \Hom_{\Lambda_S}(H_2(W,\partial W;\Lambda_S),Q) }$ and the map from $H^2(W,\partial W;Q/ \Lambda_S)$ to~$\overline{ \Hom_{\Lambda_S}(H_2(W,\partial W;\Lambda_S),Q/ \Lambda_S) }$.

\begin{lemma}
\label{lem:EvBienDef}
Let $W$ be the exterior of a pushed-in C-complex in $D^4$.
\begin{enumerate}
\item The evaluation map on $H^2(W,\partial W;Q)$ induces a well-defined map 
$$\text{ev} \colon \frac{H^2(W,\partial W;Q)}{\ker(H^2(W,\partial W;Q) \to H^2(W;Q)) } \to  \overline{ \Hom_{\Lambda_S}(\partial^{-1}(T),Q) }.$$
\item The evaluation map on $H^2(W,\partial W;Q/ \Lambda_S)$ induces a well-defined map 
 $$\text{ev} \colon \frac{H^2(W,\partial W;Q/\Lambda_S)}{\im(H^1(\partial W;Q) \to H^2(W,\partial W;Q/\Lambda_S))} \to   \overline{ \Hom_{\Lambda_S}(\partial^{-1}(T),Q/\Lambda_S) }.$$
\end{enumerate}
\end{lemma}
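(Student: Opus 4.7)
The plan is, for each of the two parts, to identify the subspace being quotiented out as coming (via a connecting homomorphism of the long exact sequence of the pair) from classes on the boundary, and then to use the adjointness of that connecting map $\delta$ with the homological boundary $\partial$ in order to rewrite the evaluation on $\partial^{-1}(T)$ as the evaluation of a $Q$-valued cocycle against a $\Lambda_S$-torsion cycle in $\partial W$.

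The one algebraic fact the whole argument hinges on is the chain-level adjointness identity
\[
\ev(\delta \beta)(x) \;=\; \overline{\ev(\beta)(\partial x)}
\]
for $[\beta] \in H^1(\partial W;M)$, $x \in H_2(W,\partial W;\Lambda_S)$, and $M \in \{Q,\,Q/\Lambda_S\}$. This follows from the familiar cochain-level description of $\delta$: pick a cocycle representative of $\beta$, extend it to a cochain $\widetilde\beta$ on $W$ so that $d\widetilde\beta$ is a relative cocycle representing $\delta[\beta]$, and pair with a relative cycle $x$ whose boundary lies in $\partial W$. Some bookkeeping is required to track the overline coming from the convention in Section \ref{sub:TwistedHomology}. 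I would verify this identity at the outset.

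For (1), the long exact sequence of the pair $(W,\partial W)$ with $Q$ coefficients identifies $\ker(H^2(W,\partial W;Q) \to H^2(W;Q))$ with $\im(\delta)$. Any class in this kernel therefore equals $\delta[\beta]$ for some $[\beta] \in H^1(\partial W;Q)$, and for $x \in \partial^{-1}(T)$ the adjointness identity yields $\ev(\delta\beta)(x) = \overline{\ev(\beta)(\partial x)}$. Since $\partial x$ is $\Lambda_S$-torsion, $\ev(\beta)$ is $\Lambda_S$-linear, and $Q$ is a torsion-free $\Lambda_S$-module, the right-hand side vanishes. Thus $\ev$ descends to the claimed quotient.

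For (2), naturality of $\delta$ with respect to the coefficient change $Q \to Q/\Lambda_S$ shows that the map $H^1(\partial W;Q) \to H^2(W,\partial W;Q/\Lambda_S)$ factors as $\delta$ followed by $i^{(W,\partial W)}_{Q,Q/\Lambda_S}$. Hence every class in this image is the reduction modulo $\Lambda_S$ of some $\delta[\beta]$ with $[\beta] \in H^1(\partial W;Q)$. Naturality of $\ev$ with respect to coefficient change then identifies $\ev(\alpha)(x) \in Q/\Lambda_S$ with the reduction of $\ev(\delta \beta)(x) \in Q$, which vanishes by (1). Therefore $\ev$ descends to the required quotient. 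The main obstacle I expect is the chain-level adjointness identity: although standard, tracking the conjugation built into $\ev$ and any degree-dependent sign demands care; once it is in place, both parts follow immediately from torsion-freeness of $Q$ and naturality of $\ev$.
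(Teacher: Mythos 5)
Your proof is correct and follows essentially the same route as the paper: the key point in both is the commutative square expressing that $\ev$ intertwines the connecting homomorphism $\delta$ with $\partial^*$, combined with the fact that a $\Lambda_S$-linear map into the torsion-free module $Q$ (or a $Q$-valued cocycle reduced mod $\Lambda_S$) kills $\Lambda_S$-torsion classes such as $\partial x$. The only slight difference is in part (2), where you deduce the statement from part (1) via the factorization $i^{(W,\partial W)}_{Q,Q/\Lambda_S}\circ\delta_Q$ and naturality of $\ev$, whereas the paper reruns the same adjointness argument directly with $Q/\Lambda_S$ coefficients; both are valid, and the extra overline in your stated identity is harmless bookkeeping since only the vanishing of the evaluation is needed.
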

\begin{proof}
From now on, we shall write $\langle \varphi,x \rangle$ instead of $(\text{ev})(\varphi)(x)$. We start by proving the first assertion. First of all, by exactness we have $\ker(H^2(W,\partial W;Q) \to H^2(W;Q))=\im(H^1(\partial W;Q) \stackrel{\delta_Q}{\to} H^2(W,\partial W;Q))$, where $\delta_Q$ denotes the boundary map in the long exact sequence of the pair. Consequently, the goal is to show that for all $\varphi$ in $H^1(\partial W;Q)$ and all $x$ in $\partial^{-1}(T)$, one has $\langle \delta_Q \varphi,x \rangle=0$. Consider the following commutative diagram:
\begin{equation}
\label{eq:EvBienDef}
\xymatrix{
H^1(\partial W;Q) \ar[d]^{\delta_Q} \ar[rrr]^{\text{ev}} &&& \overline{\op{Hom}_{\Lambda_S}(H_1(\partial W;\Lambda_S),Q)} \ar[d]^{\partial^*} \\
H^2(W,\partial W;Q) \ar[rrr]^{\text{ev}}&&& \overline{\op{Hom}_{\Lambda_S}(H_2(W,\partial W;\Lambda_S),Q)}.}
\end{equation}
Since $\partial x$ is torsion, there exists a non-zero $\lambda$ in $\Lambda_S$ for which $\lambda \partial (x)$ vanishes. The diagram in~(\ref{eq:EvBienDef}) now gives $\lambda \langle \delta_Q \varphi,x \rangle=\lambda \langle \varphi,\partial x \rangle =\langle \varphi,\lambda \partial (x) \rangle =0$. Since this equation takes place in the field $Q$ and $\lambda$ is non-zero, we get $\langle  \delta_Q \varphi,x \rangle=0$, as desired.

To prove the second claim, start with $\varphi$ in $H^1(\partial W;Q)$ and $x$ in $\partial^{-1}(T)$. Consider the change of coefficient homomorphism $i_{Q,Q/\Lambda_S}^{\partial W} \colon H^1(\partial W;Q) \to H^1(\partial W;Q/\Lambda_S)$ and the connecting  homomorphism $\delta_{Q/\Lambda_S} \colon H^1(\partial W;Q/ \Lambda_S) \to H^2(W,\partial W;Q/\Lambda_S)$. In order to show that $\langle (\delta_{Q/\Lambda_S} \circ i_{Q,Q/\Lambda_S}^{\partial W})(\varphi), x \rangle=0$, consider the same commutative diagram as displayed in~(\ref{eq:EvBienDef}) but with~$Q/\Lambda_S$ coefficients:
\begin{equation}
\label{eq:EvBienDefQLambda}
\xymatrix{
H^1(\partial W;Q/\Lambda_S) \ar[d]^{\delta_{Q/\Lambda_S}} \ar[rrr]^{\text{ev}} &&& \overline{\op{Hom}_{\Lambda_S}(H_1(\partial W;\Lambda_S),Q/\Lambda_S)} \ar[d]^{\partial^*} \\
H^2(W,\partial W;Q/\Lambda_S) \ar[rrr]^{\text{ev}}&&& \overline{\op{Hom}_{\Lambda_S}(H_2(W,\partial W;\Lambda_S),Q/\Lambda_S)}.}
\end{equation}
 Since $\varphi$ is $Q$-valued and $\partial(x)$ is torsion, the result follows from the commutativity of (\ref{eq:EvBienDefQLambda}). Indeed, $\langle (\delta_{Q/\Lambda_S} \circ i_{Q,Q/\Lambda_S}^{\partial W})(\varphi), x \rangle=\langle (i_{Q,Q/\Lambda_S}^{\partial W})(\varphi), \partial(x) \rangle$ and the latter term vanishes since cocycles which factor through $Q$ vanish on torsion elements.
\end{proof}

Recall that we use $\text{BS}^{-1}$ to denote the map from $i_{\Lambda_S}^{W,\partial W}\ker(H^2(W;\Lambda_S) \to H^2(\partial W;Q))$ to $\frac{H^1(\partial W;Q/ \Lambda_S)}{\ker(H^1(\partial W;Q/ \Lambda_S) \to H^2(\partial W;\Lambda_S)}$ which appeared in Lemma \ref{lem:BigDiagram}. Combining the previous results, we obtain the following lemma.

\begin{lemma}
\label{lem:Combine}
Let $L$ be a colored link and let $W$ be the exterior of a pushed-in totally connected C-complex for $L$. The squares and triangle in the following diagram commute, while the top pentagon anticommutes. Furthermore, the map $\Gamma:=\text{ev} \circ \text{BS}^{-1} \circ \text{PD}$ coincides with the adjoint of the Blanchfield pairing $\op{Bl}(L)$.
\begin{equation}
\label{eq:Combine}
\xymatrix@R0.6cm@C-1cm{
\partial^{-1}(T) \ar[rrrrrrrrrrrrr]^-\partial \ar[dd]_{(i_Q^{(W,\partial W),W})^{-1} \circ i_{\Lambda_S,Q}^W\circ \text{PD}}  &&&&&&&&&&&&& T  \ar[d]^{\text{BS}^{-1} \circ \text{PD}} 
\ar@/^2pc/[ddrr]^{\Gamma} 
&&\\
%%%%%%%%%%%%%%%%%%%%
&&&&&&&&&&&&& \frac{H^1(\partial W;Q/\Lambda_S)}{\ker(BS)} \ar[d]^{\delta_{Q/\Lambda_S}} \ar[rrd]^{\text{ev}}\\
%%%%%%%%%%%%%%%
\frac{H^2(W,\partial W;Q)}{\ker(H^2(W,\partial W;Q) \to H^2(W;Q)) }  \ar[rrrrrrrrrrrrr]^{i_{Q,Q/\Lambda_S}^{(W,\partial W)}} \ar[d]_{\text{ev}}  &&&&&&&&&&&&& \frac{H^2(W,\partial W;Q/\Lambda_S)}{\im(H^1(\partial W;Q) \to H^2(W,\partial W;Q/\Lambda_S))} \ar[rrd]^{\text{ev}}  &&   \overline{ \Hom_{\Lambda_S}(T,Q/\Lambda_S) } \ar[d]^{\partial^*} \\
 %%%%%%%%%%%%%%%%%%%
\overline{ \Hom_{\Lambda_S}(\partial^{-1}(T),Q) }\ar[rrrrrrrrrrrrrrr] &&&&&&&&&&&&&&&  \overline{ \Hom_{\Lambda_S}(\partial^{-1}(T),Q/\Lambda_S) }.
}
\end{equation}
\end{lemma}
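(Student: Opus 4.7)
The lemma decomposes into three independent claims: commutativity of the lower-left square and the right-hand triangles (all expressing naturality of evaluation), anticommutativity of the top pentagon, and identification of $\Gamma$ with the adjoint of $\op{Bl}(L)$. I would address these in roughly this order, as the commutativities essentially just package what was already needed for Lemmas \ref{lem:EvBienDef} and \ref{lem:PDBienDef} to be well-defined.

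For the commutativities, the top-right triangle is tautological by the very definition of $\Gamma$. The two triangles that involve $\partial^*$ express naturality of the evaluation pairing with respect to the boundary map: for any cohomology class $\varphi$ and any $x \in \partial^{-1}(T)$, one has the identities $\langle \delta_{Q/\Lambda_S}\varphi, x\rangle = \langle \varphi, \partial(x)\rangle$ and $\langle \delta_Q\varphi, x\rangle = \langle \varphi, \partial(x)\rangle$, which are exactly the content of the commutative squares (\ref{eq:EvBienDef}) and (\ref{eq:EvBienDefQLambda}) that appeared inside the proof of Lemma \ref{lem:EvBienDef}; combined with the well-definedness of the descended evaluation maps established there, these diagrams give the claimed triangle commutativities. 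The square between the two evaluation maps commutes by naturality of evaluation under the coefficient change $Q \to Q/\Lambda_S$.

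For the top pentagon, the plan is to feed Lemma \ref{lem:BigDiagram} with an element of the form $\text{PD}(x)$ for $x \in \partial^{-1}(T)$, which is legitimate by Lemma \ref{lem:PDBienDef}(1). The commutative square (\ref{eq:DiagPDBiendef}) from the proof of Lemma \ref{lem:PDBienDef} gives the identity $i_{\Lambda_S}^{W,\partial W}\circ \text{PD} = \text{PD}\circ\partial$ on $\partial^{-1}(T)$; under the total connectedness hypothesis, Lemma \ref{lem:PDBienDef}(2) further allows us to interpret the common value as lying in $i_{\Lambda_S}^{W,\partial W}\ker(H^2(W;\Lambda_S)\to H^2(\partial W;Q))$, which is exactly the source required for the $\text{BS}^{-1}$ arrow. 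This precisely translates the upper horizontal arrow and upper-right descent of Lemma \ref{lem:BigDiagram} into the $\partial$ followed by $\text{BS}^{-1}\circ\text{PD}$ route of the pentagon, while the left descent of the pentagon is the left descent of Lemma \ref{lem:BigDiagram} precomposed with $\text{PD}$. The anticommutativity of Lemma \ref{lem:BigDiagram} thus transfers to the pentagon.

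Finally, to identify $\Gamma$ with the adjoint of $\op{Bl}(L)$, I would trace through the four-step composition defining $\Omega$ in Subsection \ref{sub:Blanchfield}: the maps labelled $(i)$--$(iv)$ are, respectively, the excision isomorphism $TH_1(X_L;\Lambda_S) \cong TH_1(X_L,\partial X_L;\Lambda_S)$, Poincar\'e duality, the inverse Bockstein $\text{BS}^{-1}$, and evaluation. Under the natural identification of the relevant torsion of $X_L$ with (a submodule of) $T = TH_1(\partial W;\Lambda_S)$ coming from the inclusion $X_L \hookrightarrow \partial W$, these four maps are literally the factors of $\text{ev}\circ\text{BS}^{-1}\circ\text{PD} = \Gamma$, so $\Gamma$ coincides with the adjoint of $\op{Bl}(L)$ by construction. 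The main obstacle will be the pentagon: even though its essence is the purely algebraic Lemma \ref{lem:BigDiagram}, correctly matching up all the various kernels, images, and restricted Poincar\'e duality maps — and ensuring that the total connectedness hypothesis is invoked precisely where it is needed — requires a careful diagram chase.
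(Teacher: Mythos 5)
Your proposal is correct and follows essentially the same route as the paper: the triangle by definition of $\Gamma$, the remaining commutativities via the evaluation diagrams (\ref{eq:EvBienDef})--(\ref{eq:EvBienDefQLambda}) and coefficient-change naturality, the pentagon by feeding Lemma~\ref{lem:BigDiagram} with $\text{PD}(x)$ using Lemma~\ref{lem:PDBienDef}, and the identification $\Gamma=\Omega$ via the inclusion $X_L\hookrightarrow\partial W$. The only refinement the paper adds is that this inclusion induces an \emph{isomorphism} $H_1(X_L;\Lambda_S)\cong H_1(\partial W;\Lambda_S)$ (citing \cite[Lemma 5.2]{ConwayFriedlToffoli}), not merely a submodule identification, and the first map in $\Omega$ is the inclusion-induced map to the pair rather than an excision isomorphism.
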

\begin{proof}
We start by arguing that the maps in (\ref{eq:Combine}) are well-defined. For the upper right evaluation map, this follows from the same argument as the one which was used in Section~\ref{sub:Blanchfield}, just before Definition \ref{def:BlanchfieldLinks}. All the other maps are well-defined thanks to Lemma~\ref{lem:BigDiagram}, Lemma \ref{lem:PDBienDef} and Lemma \ref{lem:EvBienDef}.  The top pentagon anticommutes thanks to Lemma~\ref{lem:PDBienDef} and Lemma~\ref{lem:BigDiagram}. 
%The remaining squares and triangle clearly commute.
The top triangle commutes by definition of $\Gamma$, the bottom square clearly commutes, while the commutativity of the rightmost square follows from~(\ref{eq:EvBienDefQLambda}). To prove the second assertion, we start by noting that \cite[Lemma 5.2]{ConwayFriedlToffoli} implies that the inclusion induced map $H_1(X_L;\Lambda_S) \to H_1(\partial W;\Lambda_S)$ is an isomorphism. Using this fact, we observe that $\Gamma$ is defined exactly as the adjoint $\Omega$ of the Blanchfield pairing was, see Subsection \ref{sub:Blanchfield}.
\end{proof}

Looking at the leftmost column of (\ref{eq:Combine}), we wish to define a pairing on $\partial^{-1}(T)$. To do this, we start by considering the composition
\begin{align*}
\Theta \colon \ \ \ \partial^{-1}(T)
&\stackrel{\text{PD}}{\longrightarrow} \ker(H^2(W;\Lambda_S) \to H^2(\partial W;Q))   \\
&\stackrel{i^W_{\Lambda_S,Q}}{\longrightarrow} i^W_{\Lambda_S,Q} \ker(H^2(W;\Lambda_S) \to H^2(\partial W;Q))\\
&\longrightarrow  \frac{H^2(W,\partial W;Q)}{\ker(H^2(W,\partial W;Q) \to H^2(W;Q)) } \\
&\stackrel{\text{ev}}{\longrightarrow}  \overline{ \Hom_{\Lambda_S}(\partial^{-1}(T),Q) }
\end{align*}
of $\Lambda_S$-linear homomorphisms, where the third arrow denotes the homomorphism $(i_Q^{(W,\partial W),W})^{-1}$ which was described in the discussion leading up to Lemma \ref{lem:BigDiagram}. Note that the first map is well-defined thanks to Lemma \ref{lem:PDBienDef}, the second map is obviously well-defined, the discussion prior to Lemma \ref{lem:BigDiagram} ensures that the third map is well-defined, and the fourth map is well-defined thanks to Lemma \ref{lem:EvBienDef}. We define the desired pairing on $ \partial^{-1}(T)$ by 
$$\theta(x,y):=\Theta(y)(x).$$
Recall from Lemma \ref{lem:Combine} and its proof that the pairing defined by $\Gamma$ on $TH_1(\partial W;\Lambda_S)$ coincides with the Blanchfield pairing on $TH_1(X_L;\Lambda_S)$. Using these identifications, Lemma \ref{lem:Combine} implies the following proposition.

\begin{proposition}
\label{prop:ReduceToTheta}
Let $L$ be a colored link and let $W$ be the exterior of a pushed-in totally connected C-complex for $L$. The following diagram commutes:
\begin{equation}
\label{eq:DiagramThreeLines}
\xymatrix@C1.4cm@R0.5cm{ 
\partial^{-1}(TH_1(\partial W;\Lambda_S)) \times \partial^{-1}(TH_1(\partial W;\Lambda_S)) \ar[r]^{\ \ \ \ \ \ \ \ \ \ \ \ \ \ \ \ \ \ \ \  -\theta}\ar[d]^{\partial \times \partial } & Q \ar[d]   \\
TH_1(\partial W;\Lambda_S) \times TH_1(\partial W;\Lambda_S) \ar[r]^{\ \ \ \ \ \ \ \ \ \  \ \ \operatorname{Bl}(L)}& Q/\Lambda_S.
}
\end{equation} 
\end{proposition}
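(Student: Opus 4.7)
The plan is to reduce the claim to an equality of $\Lambda_S$-linear maps $\partial^{-1}(T) \to \overline{\Hom_{\Lambda_S}(\partial^{-1}(T), Q/\Lambda_S)}$, and then to deduce this equality by a chase through the large diagram~(\ref{eq:Combine}) provided by Lemma~\ref{lem:Combine}.

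First, I would rewrite both sides of the proposed commutativity in adjoint form. By construction, $\theta(x,y)=\Theta(y)(x)$, where $\Theta$ is the composite running down the leftmost column of~(\ref{eq:Combine}). On the other side, Lemma~\ref{lem:Combine} identifies $\Gamma$ with the adjoint $\Omega$ of the Blanchfield pairing, after the canonical identification $TH_1(X_L;\Lambda_S) \cong T = TH_1(\partial W;\Lambda_S)$; hence $\operatorname{Bl}(L)(\partial x, \partial y)=\Gamma(\partial y)(\partial x)=(\partial^{*}\circ\Gamma\circ\partial)(y)(x)$. Writing $\mathrm{pr}\colon \overline{\Hom_{\Lambda_S}(\partial^{-1}(T),Q)} \to \overline{\Hom_{\Lambda_S}(\partial^{-1}(T),Q/\Lambda_S)}$ for the projection induced by $Q \to Q/\Lambda_S$, the commutativity of~(\ref{eq:DiagramThreeLines}) is thus equivalent to the functional identity
$$\mathrm{pr}\circ\Theta \;=\; -\,\partial^{*}\circ\Gamma\circ\partial.$$

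To establish this identity, I would trace both composites inside diagram~(\ref{eq:Combine}). The left-hand side is the composite going down the leftmost column (the definition of $\Theta$) followed by the coefficient projection implemented by the bottom row. The right-hand side is the composite going across the top via $\partial$, down the rightmost column via $\mathrm{BS}^{-1}\circ \mathrm{PD}$ and $\delta_{Q/\Lambda_S}$, and then rightward through evaluation and $\partial^{*}$. By Lemma~\ref{lem:Combine}, the top triangle, the bottom square, and the rightmost square all commute, while the top pentagon anticommutes; this pentagon is the unique source of the minus sign, and it propagates through the $\Lambda_S$-linear evaluation maps without change. Evaluating the resulting functional identity at $x\in\partial^{-1}(T)$ then yields the scalar identity $-\theta(x,y)\equiv \operatorname{Bl}(L)(\partial x,\partial y)\pmod{\Lambda_S}$, which is exactly the commutativity of~(\ref{eq:DiagramThreeLines}).

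I do not anticipate serious obstacles, since all the hard work has been absorbed into the preceding lemmas: the well-definedness of the diagonal and vertical maps was handled in Lemmas~\ref{lem:PDBienDef} and~\ref{lem:EvBienDef}, the total connectedness of the C-complex has been invoked exactly where Poincar\'e duality needs to land in the correct subspace, and the sign issue comes directly from Lemma~\ref{lem:cube}. The only minor bookkeeping concern is to keep track of the projections $\mathrm{pr}$ and $\partial^{*}$ when passing from the diagram-level equality to the evaluation at the arguments $x$ and $y$, but this is routine given that every map in sight is $\Lambda_S$-linear.
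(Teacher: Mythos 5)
Your argument is correct and is essentially the paper's own proof: the paper deduces Proposition \ref{prop:ReduceToTheta} directly from Lemma \ref{lem:Combine} (after identifying $TH_1(X_L;\Lambda_S)$ with $TH_1(\partial W;\Lambda_S)$), and your diagram chase through (\ref{eq:Combine}) — using the commuting triangle, bottom and rightmost squares together with the anticommuting pentagon to get $\mathrm{pr}\circ\Theta=-\partial^{*}\circ\Gamma\circ\partial$, then evaluating at $(x,y)$ — is exactly the chase the paper leaves implicit.
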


As (\ref{eq:DiagramThreeLines}) suggests, the computation of the Blanchfield pairing now boils down to the computation of $\theta$. The remainder of the proof is devoted to this task.

\medskip
From now on, we shall assume that $W$ is the exterior of a pushed-in \emph{totally connected} C-complex. The intersection form $\lambda$ on $W$ is defined as the adjoint of the composition
\begin{equation}
\label{eq:DefIntersection}
\Phi \colon H_2(W;\Lambda_S) \stackrel{i}{\to} H_2(W,\partial W;\Lambda_S) \stackrel{\text{PD}}{\rightarrow} H^2(W;\Lambda_S) \stackrel{\text{ev}}{\rightarrow} \overline{\op{Hom}_{\Lambda_S}(H_2(W;\Lambda_S),\Lambda_S)}.
\end{equation}
In other words, $\lambda(x,y):=\Phi(y)(x)$, see for instance~\cite[Section 2.3]{ConwayFriedlToffoli} for details. In particular, we notice that $\Phi$ vanishes on $\ker(i)$ and descends to a map on $H_2(W;\Lambda_S)/\ker(i)$ which we also denote by $\Phi$.

Since we assumed that $W$ is the exterior of a pushed-in totally connected C-complex, \cite[Corollary 3.2]{ConwayFriedlToffoli} implies that $H_1(W;\Lambda_S)=0$. Thus, there is an exact sequence 
$$H_2(W;\Lambda_S) \stackrel{i}{\to} H_2(W,\partial W;\Lambda_S) \stackrel{\partial}{\to} H_1(\partial W;\Lambda_S) \to 0.$$
Consequently, from now on, we shall identify $H_1(\partial W;\Lambda_S)$ with the cokernel of the map $i$. In particular, elements of $H_1(\partial W;\Lambda_S)$ will be denoted by $[x]$, where $x$ lies in $H_2(W,\partial W;\Lambda_S)$. Furthermore, we shall identify the boundary map $\partial$ with the quotient map of $H_2(W,\partial W ; \Lambda_S)$ onto $\coker(i)$. In other words, we allow ourselves to interchangeably write $\partial (x)$ and $[x]$. 

Let $x,y$ be in $\partial^{-1}(T)$. Since $[x]$ and $[y]$ are torsion, there exists $x_0$ and $y_0$ in $H_2(W;\Lambda_S)$ and $r,s \in \Lambda_S$ such that $r x=i(x_0)$ and $s y =i(y_0)$. 
Define a $Q$-valued pairing $\psi$ on $\partial^{-1}(T)$ by setting
$$\psi(x,y) := \frac{1}{r\overline{s}}\lambda(x_0,y_0).$$
Observe that $\psi$ is well-defined: if $x_0$ and $x_0'$ satisfy $i(x_0)=r x$ and $i(x_0')=r'x$, then $r'x_0-rx_0'$ lies in $\ker(i)$ and thus 
$\frac{r'}{r'r\overline{s}}\lambda(x_0,y)-\frac{r}{rr'\overline{s}}\lambda(x_0',y)=\frac{1}{r'r\overline{s}}\lambda(r'x_0-rx_0',y)=0,$
as we observed above. 
The same reasoning applies to the second variable. In particular, we could have very well taken $x_0$ and $y_0$ in $H_2(W;\Lambda_S)/\ker(i)$. Summarizing, we have two $Q$-valued pairings defined on $\partial^{-1}(T)$ and we wish to show that they agree:

\begin{proposition}
\label{prop:ComputeTheta}
$\theta$ is equal to $\psi$.
\end{proposition}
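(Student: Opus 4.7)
The plan is to unwind both pairings on arbitrary $x,y\in\partial^{-1}(T)$ and reduce them to a common expression. Fix lifts $x_0,y_0\in H_2(W;\Lambda_S)$ with $i(x_0)=\Delta x$ and $i(y_0)=\Delta y$. The key input, not yet explicitly introduced, is the \emph{second} Poincar\'e--Lefschetz duality isomorphism
$$\widetilde{\PD}\colon H_2(W;\Lambda_S)\xrightarrow{\cong} H^2(W,\partial W;\Lambda_S),$$
together with its two standard naturality properties coming from capping with $[W,\partial W]$: the compatibility with inclusions, $\PD\circ i=i^{(W,\partial W),W}\circ\widetilde{\PD}$, and the adjoint evaluation relation
$$\ev(i^{(W,\partial W),W}(\alpha))(\beta)=\ev(\alpha)(i(\beta))$$
valid for every $\alpha\in H^2(W,\partial W;\Lambda_S)$ and $\beta\in H_2(W;\Lambda_S)$.

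First I would identify $\Theta(y)$ by producing an explicit lift. Setting $\alpha:=\tfrac{1}{\Delta}\,\widetilde{\PD}(y_0)\in H^2(W,\partial W;Q)$, the inclusion--duality compatibility together with $i(y_0)=\Delta y$ yields
$$i_Q^{(W,\partial W),W}(\alpha)=\tfrac{1}{\Delta}\PD_Q(i(y_0))=\PD_Q(y)=i^W_{\Lambda_S,Q}(\PD(y)).$$
Thus $\alpha$ is a valid lift under the middle vertical map of Lemma \ref{lem:BigDiagram}, so $(i_Q^{(W,\partial W),W})^{-1}(i^W_{\Lambda_S,Q}(\PD(y)))=[\alpha]$ and consequently
$$\theta(x,y)=\Theta(y)(x)=\ev(\alpha)(x)=\tfrac{1}{\Delta}\,\ev(\widetilde{\PD}(y_0))(x).$$

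Next, I would massage $\psi(x,y)$ into the same form. By the definition (\ref{eq:DefIntersection}) of $\lambda$, combined with both naturality statements above and with $i(x_0)=\Delta x$,
\begin{align*}
\lambda(x_0,y_0)&=\ev(\PD(i(y_0)))(x_0)=\ev(i^{(W,\partial W),W}(\widetilde{\PD}(y_0)))(x_0) \\
&=\ev(\widetilde{\PD}(y_0))(i(x_0))=\Delta\cdot\ev(\widetilde{\PD}(y_0))(x).
\end{align*}
Dividing by $\Delta^2$ gives $\psi(x,y)=\tfrac{1}{\Delta}\,\ev(\widetilde{\PD}(y_0))(x)$, which coincides with the expression for $\theta(x,y)$ obtained in the previous paragraph.

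The argument is essentially a diagram chase once $\widetilde{\PD}$ is on the table. The main obstacle I anticipate is bookkeeping: one must verify that all intermediate quantities are independent of the auxiliary choices. Different lifts $y_0$ differ by elements of $\ker(i)$, on which $\widetilde{\PD}(\cdot)$ followed by evaluation at $x$ vanishes by exactly the same naturality trick used above (this is the manifestation of the fact that $\Phi$ vanishes on $\ker(i)$); and the lift $\alpha$ representing $\Theta(y)$ is only well-defined modulo $\ker(i_Q^{(W,\partial W),W})=\im(\delta_Q)$, but Lemma \ref{lem:EvBienDef} ensures this ambiguity evaluates to zero on $\partial^{-1}(T)$.
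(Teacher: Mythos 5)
Your argument is correct and is essentially the paper's own proof in condensed form: your explicit lift $\alpha=\tfrac{1}{\Delta}\widetilde{\PD}(y_0)$ is exactly the device of Remark~\ref{rem:ExampleOfMapk}, and the two naturality properties you invoke (namely $\PD\circ i=i_{\Lambda_S}^{(W,\partial W),W}\circ\widetilde{\PD}$ and the adjointness of $\ev$ with induced maps) are precisely what the paper uses to verify the commutativity of diagram~(\ref{eq:PsiEqualsTheta}) in Lemma~\ref{lem:AllCommute}, so your direct chase just collapses that diagram into one computation. The only bookkeeping point worth noting is that $\ev$ is conjugate-linear in the cohomology variable, so pulling $\tfrac{1}{\Delta}$ out of $\ev(\alpha)$ really yields $\tfrac{1}{\overline{\Delta}}$; this is harmless since $\overline{\Delta}=\Delta$ (cf.\ Remark~\ref{rem:HAndTorsion}), a fact the paper's own diagram chase also uses implicitly.
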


Before diving into the proof, we set up some notation. First, define $j \colon \partial^{-1}(T) \to \im(i)\otimes_{\Lambda_S} Q$ as follows. 
Given $x$ in $\partial^{-1}(T)$, we set $j(x):=i(x_0) \otimes \frac{1}{r}$, where $x_0$ is any element of $H_2(W;\Lambda_S)$ which satisfies $i(x_0)=r x$ with $r \in \Lambda_S$. 
The map $j$ is well-defined as $i(x_0) \otimes \frac{1}{r}=rx \otimes \frac{1}{r}=x \otimes 1$.
Next, we set 
$$K:=\ker (H^2(W;\Lambda_S) \stackrel{i^{W,\partial W}_{\Lambda_S}}{\longrightarrow} H^2(\partial W;\Lambda_S) \stackrel{i^{\partial W}_{\Lambda_S,Q}}{\longrightarrow} H^2(\partial W;Q)).$$ 
Note that $K$ already appeared in Lemma \ref{lem:BigDiagram} as well as in the definition of $\theta$. The discussion leading up to Lemma \ref{lem:BigDiagram} also provided a homomorphism $(i_Q^{(W,\partial W),W})^{-1}$ whose domain was $i^W_{\Lambda_S,Q}(K)$. 
For the moment however, we shall rename it as 
$$k^* \colon \ \ i^W_{\Lambda_S,Q}(K) \to \frac{H^2(W,\partial W;Q)}{\ker(H^2(W,\partial W ;Q) \to H^2(W;Q))} $$
and recall its definition. Given $\phi$ in $K$, the definition of $K$ implies that $(i_Q^{\partial W, W} \circ i^W_{\Lambda_S,Q})(\phi)$ vanishes. Using the exactness of the long exact sequence of the pair $(W,\partial W)$ with $Q$ coefficients, it follows that $i_{Q}^{(W,\partial W),W}(\xi)=i^W_{\Lambda_S,Q}(\phi)$ for some $\xi \in H^2(W,\partial W;Q)$. The map $k^*$ is defined by $k^* (i^W_{\Lambda_S,Q} (\phi))=[\xi]$.

\begin{remark}
\label{rem:ExampleOfMapk}
Note that if $\phi= i_{\Lambda_S}^{(W,\partial W),W}( \varphi)$ for some $\varphi$ in $H^2(W,\partial W;\Lambda_S)$, then the description of $k^*$ becomes more concrete. The reason is that we can pick $\xi$ to be $ i_{\Lambda_S,Q}^{(W,\partial W)}(\varphi)$. Indeed, we have 
$$i_{Q}^{(W,\partial W),W}(\xi)
=(i_{Q}^{(W,\partial W)} \circ i_{\Lambda_S,Q}^{(W,\partial W)})(\varphi)
= ( i^W_{\Lambda_S,Q} \circ i_{\Lambda_S}^{(W,\partial W),W})( \varphi) 
=i^W_{\Lambda_S,Q}(\phi),$$
where the second equality follows from the diagram below:
\begin{equation}
\label{eq:ExampleOfMapk}
\xymatrix{
H^2(W,\partial W;\Lambda_S) \ar[r]^{i_{\Lambda_S,Q}^{(W,\partial W)}} \ar[d]^{i_{\Lambda_S}^{(W,\partial W),W}}
& H^2(W,\partial W;Q) \ar[d]^{i_Q^{(W,\partial W),W}}\\
H^2(W;\Lambda_S) \ar[r]^{i_{\Lambda_S,Q}^{W}} 
 & H^2(W;Q).
}
\end{equation}
Summarizing, we have~$(k^* \circ i^W_{\Lambda_S,Q} \circ i_{\Lambda_S}^{(W,\partial W),W})(\varphi)= i_{\Lambda_S,Q}^{(W,\partial W)}(\varphi)$. 
\end{remark}

Let us temporarily write $V$ instead of $H_2(W;\Lambda_S)$. Proposition \ref{prop:ComputeTheta} will follow if we manage to show that all the maps in (\ref{eq:PsiEqualsTheta}) are well-defined and produce a commutative diagram. Indeed, in this diagram, there are several routes which lead from the upper right corner to the lower left corner. Taking the uppermost route produces the pairing $\psi$, while the lowermost route produces $\theta$:

\begin{equation}
\label{eq:PsiEqualsTheta}
\xymatrix@R1cm@C0.45cm{
\frac{V}{\ker(i)} \otimes_{\Lambda_S} Q \ar[dd]^{\widetilde{\Phi}} \ar[r]^{i \otimes \id_Q}_\cong
& \im(i) \otimes_{\Lambda_S} Q \ar[d]^{PD \otimes  \id_Q}
& \partial^{-1}(T) \ar[l]_j \ar[d]^{\text{PD}} \\
%%%%%%%%%%
&i_{\Lambda_S}^{(W,\partial W),W} \circ \text{PD} (V)  \otimes_{\Lambda_S} Q \ar[d]^{i^W_{\Lambda_S,Q} \otimes \id_Q}
& K \ar[d]^{i^W_{\Lambda_S,Q}} \\
%%%%%%%%%
\overline{\op{Hom}_{\Lambda_S}(\frac{V}{\ker(i)} \otimes_{\Lambda_S} Q,Q)}  \ar[d]^{  j^*((i^{-1})^*\otimes Q)  }
&  i^W_{\Lambda_S,Q} \circ i_{\Lambda_S}^{(W,\partial W),W} \circ PD(V)\otimes_{\Lambda_S} Q \ar[l]_-{{\widetilde{ev}}}  \ar[d]^{k^* \otimes \id_Q}  \ar[r]^-{\text{mult}}
&i^W_{\Lambda_S,Q}(K)  \ar[d]^{k^*}\\
%%%%%%
\overline{\op{Hom}_{\Lambda_S}(\partial^{-1}(T),Q)} 
&  k^* \circ i^W_{\Lambda_S,Q} \circ i_{\Lambda_S}^{(W,\partial W),W} \circ PD(V) \otimes_{\Lambda_S} Q  \ar[r]^-{\text{mult}}  \ar[l]_-{{\widetilde{ev}}}
& \frac{H^2(W,\partial W;Q)}{\ker(H^2(W,\partial W;Q) \to H^2(W;Q))}. \ar@/^2pc/[ll]^{\text{ev}}
}
\end{equation}

We define the maps $\text{mult},\widetilde{\ev}$ and $\widetilde{\Phi}$.
In both instances, $\text{mult}(\varphi \otimes q)=q \cdot \varphi$.
The bottom left evaluation map is given by~$\widetilde{\ev}(\varphi \otimes q)(x)=\overline{q}\langle \varphi,x\rangle.$
The middle evaluation map is given by~$\widetilde{\ev}(\varphi \otimes b)(x \otimes a)=a\overline{b}\langle \varphi,x\rangle.$
The map labeled $\widetilde{\Phi}$ is defined as~$\widetilde{\Phi}([y_0] \otimes b)([x_0] \otimes a)=a\overline{b} \Phi([y_0])([x_0])$, where~$\Phi$ (the adjoint of the intersection form) was defined in~\eqref{eq:DefIntersection}.
\color{black}

We now argue that all the maps in (\ref{eq:PsiEqualsTheta}) are well-defined. 
We already checked that the rightmost vertical maps are well-defined, see Lemma \ref{lem:BigDiagram} and Lemma \ref{lem:PDBienDef}.
The middle Poincar\'e duality map is well-defined: this follows immediately from the equality $PD \circ i =i_{\Lambda_S}^{(\partial W, W),W} \circ PD.$ 
Next, we deal with the two multiplication maps on the bottom right. 
First observe that $i_{\Lambda_S}^{(\partial W, W),W} \circ PD(V)$ is a subspace of $K$: indeed $K=\ker (H^2(W;\Lambda_S) \stackrel{i^{W,\partial W}_{\Lambda_S}}{\longrightarrow} H^2(\partial W;\Lambda_S) \stackrel{i^{\partial W}_{\Lambda_S,Q}}{\longrightarrow} H^2(\partial W;Q))$, and $i^{W,\partial W}_{\Lambda_S} \circ i_{\Lambda_S}^{(W,\partial W),W}=0$ by exactness. 
Consequently $i^W_{\Lambda_S,Q} \circ  i_{\Lambda_S}^{(W,\partial W),W} \circ PD(V)$ is a subspace of $i^W_{\Lambda_S,Q}(K)$. It then follows that $k^* \circ i^W_{\Lambda_S,Q} \circ i_{\Lambda_S}^{(W,\partial W),W} \circ PD(V)$ is a subspace of the term in the lower right corner. 
Therefore the maps labeled ``mult" are well defined.
It also follows from these observations and Lemma \ref{lem:EvBienDef} that the lower two evaluation maps in (\ref{eq:PsiEqualsTheta}) are well-defined. 
The next lemma will conclude the proof of Proposition \ref{prop:ComputeTheta}.

\begin{lemma}
\label{lem:AllCommute}
All the squares in (\ref{eq:PsiEqualsTheta}) commute.
\end{lemma}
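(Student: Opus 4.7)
The plan is to chase elements around each square in~(\ref{eq:PsiEqualsTheta}), relying on four standard ingredients: (i) naturality of Poincar\'e duality, i.e.\ $PD \circ i = i_{\Lambda_S}^{(W,\partial W),W} \circ PD$ on $V = H_2(W;\Lambda_S)$; (ii) naturality of the evaluation pairing with respect to the change of coefficients $\Lambda_S \hookrightarrow Q$ and with respect to the inclusion $i\colon V \to H_2(W,\partial W;\Lambda_S)$; (iii) Remark~\ref{rem:ExampleOfMapk}, which identifies $k^* \circ i^W_{\Lambda_S, Q} \circ i_{\Lambda_S}^{(W, \partial W), W}$ with $i_{\Lambda_S, Q}^{(W, \partial W)}$; and (iv) the defining relation $i(x_0) = \Delta x$ for $x \in \partial^{-1}(T)$.

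The easier squares I would dispose of first. The upper-left square simply reproduces the definition of the intersection form $\Phi$ in~(\ref{eq:DefIntersection}), and so commutes by definition. The upper-right square reduces to the identity $PD(j(x)) = PD(i(x_0)) = i_{\Lambda_S}^{(W, \partial W), W}(PD(x_0))$, which is (i) combined with (iv). The two squares joining rows~2 and~3 commute by naturality of evaluation under $\Lambda_S \hookrightarrow Q$. The lower-right square commutes because $k^*$ is $\Lambda_S$-linear while the two horizontal arrows are scalar multiplication by $\frac{1}{\Delta}$.

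The heart of the argument is the lower-left square. For $y \in \partial^{-1}(T)$ with $\Delta y = i(y_0)$, the upper route, evaluated at $x \in \partial^{-1}(T)$ with $\Delta x = i(x_0)$, yields $\frac{1}{\Delta^2}\Phi(y_0)(x_0) = \frac{1}{\Delta^2}\lambda(x_0, y_0)$ directly from the definitions of $\frac{1}{\Delta^2} j^*(i^{-1})^*$ and of $\Phi$. For the lower route, Remark~\ref{rem:ExampleOfMapk} combined with $i_{\Lambda_S}^{(W,\partial W),W}(PD(y_0)) = \Delta\,PD(y)$ gives
\[
k^*\bigl(i^W_{\Lambda_S, Q}(PD(y))\bigr) \;=\; \tfrac{1}{\Delta}\, i_{\Lambda_S, Q}^{(W, \partial W)}(PD(y_0))
\]
in the quotient $H^2(W,\partial W;Q)/\ker(\dots)$. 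The additional factor $\frac{1}{\Delta}$ in the bottom arrow together with naturality of evaluation (with respect both to $\Lambda_S \hookrightarrow Q$ and to $i$) transforms the result into
\[
\tfrac{1}{\Delta^2}\,\langle PD(y_0),\, i(x_0)\rangle \;=\; \tfrac{1}{\Delta^2}\,\langle i_{\Lambda_S}^{(W,\partial W),W}(PD(y_0)),\, x_0\rangle \;=\; \tfrac{1}{\Delta^2}\,\Phi(y_0)(x_0),
\]
matching the upper route.

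The main obstacle will be purely notational bookkeeping: distinguishing the two versions of Poincar\'e duality ($V \to H^2(W,\partial W;\Lambda_S)$ versus $H_2(W,\partial W;\Lambda_S) \to H^2(W;\Lambda_S)$), selecting the lifts $x_0, y_0$ so that the $\Delta$ factors in rows~3 and~4 balance correctly against the $\frac{1}{\Delta^2}$ in the left column, and confirming that each map is applied to elements in its well-defined domain as guaranteed by Lemmas~\ref{lem:BigDiagram}, \ref{lem:PDBienDef} and \ref{lem:EvBienDef}. Once these points are fixed, every square becomes a direct application of naturality.
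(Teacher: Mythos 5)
Your argument is correct and follows essentially the same route as the paper: the trivial squares are dismissed by definition of $\Phi$ and naturality of evaluation, the upper-right rectangle by the relation $i(x_0)=\Delta x$ defining $j$, and the lower-left square by Remark~\ref{rem:ExampleOfMapk} together with naturality of evaluation and Poincar\'e duality. The only cosmetic difference is that the paper checks the lower-left square against an arbitrary $\varphi\in H^2(W,\partial W;\Lambda_S)$, obtaining $\tfrac{1}{\Delta}\langle\varphi,x\rangle$ on both sides, whereas you specialize to $\varphi=\mathrm{PD}(y_0)$ and land on $\tfrac{1}{\Delta^2}\lambda(x_0,y_0)$; these are the same computation.
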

\begin{proof}
The upper left square commutes by definition of $\Phi$, see (\ref{eq:DefIntersection}). 
%Both give r\overline{s}\langle PD(i(y_0)),x_0 \rangle
%Because $i_{\Lambda_S,Q}^W$ does nothing once we evaluate.
The bottom right square and the bottom triangle clearly commute. 
%%%%
%For the bottom triangle, one has to use the module structures with some care.
%\widetilde{\ev}(\varphi \otimes s)(x)=\overline{s}\ev(\varphi)(x).
%\ev(\mult(\varphi \otimes s)(x))=\ev(s \cdot \varphi )(x))=\overline{s}\ev(\varphi)(x).
%The linearity of ev gives that bar over the s.
%%%%
Let us now deal with the large rectangle on the upper right. Start with $x$ in $\partial^{-1}(T)$. 
Using the definition of $j$, we have $j(x)=i(x_0) \otimes \frac{1}{r}$, where $r \in \Lambda_S$ and $x_0 \in H_2(W;\Lambda_S)$ satisfy $i(x_0)=r x$. The desired relation now follows readily:
$$\text{mult}(i^W_{\Lambda_S,Q} \circ \text{PD}  \circ j)(x)=\frac{1}{r} (i^W_{\Lambda_S,Q} \circ \text{PD} \circ i)(x_0)=(i^W_{\Lambda_S,Q} \circ PD)(x).$$
Finally, we deal with the lower left square. 
Let $\varphi$ be in $H^2(W,\partial W;\Lambda_S)$, let $q \in Q$ and let $x$ be in $\partial^{-1}(T)$. 
Using once again the definition of $j$, we have $(i^{-1}\circ j)(x)=[x_0]\otimes \frac{1}{r}$ where $r \in \Lambda_S$ and $x_0 \in H_2(W;\Lambda_S)$ satisfy $i(x_0)=r x$. Consequently, writing brackets in place of~$\widetilde{ev}$, we get the relation
\begin{align*}
 \langle  (i^W_{\Lambda_S,Q} \circ  i_{\Lambda_S}^{(W,\partial W),W})(\varphi) \otimes q , (i^{-1} \circ j)(x) \rangle 
&=\overline{q}  \langle  (i^W_{\Lambda_S,Q} \circ  i_{\Lambda_S}^{(W,\partial W),W})(\varphi) , [x_0] \otimes \frac{1}{r} \rangle
=\overline{q} \frac{1}{r} \langle  \varphi , i ([x_0]) \rangle \\
&=\overline{q}\langle \varphi,x \rangle,
\end{align*}
where in the second equality, we simultaneously used that induced maps commute with evaluations and the fact that $i_{\Lambda_S,Q}^W$ changes the coefficients without affecting the expression involved. 
On the other hand, recalling the conclusion of Remark \ref{rem:ExampleOfMapk}, we can compute the other term:
$$ \langle (k^* \circ i^W_{\Lambda_S,Q} \circ  i_{\Lambda_S}^{(W,\partial W),W})(\varphi) \otimes q , x\rangle
=\overline{q}\langle i_{\Lambda_S,Q}^{(W,\partial W)}(\varphi), x \rangle
=\overline{q} \langle  \varphi ,  x \rangle.$$
Combining these observations, the lower left square of (\ref{eq:PsiEqualsTheta}) commutes. This concludes the proof the lemma and thus the proof of Proposition \ref{prop:ComputeTheta}.
\end{proof}

We are now in position to conclude the proof of Theorem \ref{thm:main}. 

\begin{proof}[\textit{Proof of Theorem \ref{thm:main}}]
Let $L$ be a colored link and let $W$ be the exterior of a pushed-in totally connected C-complex for $L$. Recall that $i$ denotes the inclusion induced map from $H_2(W;\Lambda_S)$ to $H_2(W,\partial W;\Lambda_S)$ and that given torsion elements $[x]$ and $[y]$ in $ H_1(X_L;\Lambda_S) \cong H_1(\partial W;\Lambda_S) \cong \coker(i)$, there exists $x_0,y_0 \in H_2(W;\Lambda_S)$ and $r,s \in \Lambda_S$ such that $i(x_0)=r x$ and $i(y_0)=s y$. 
Using Proposition \ref{prop:ReduceToTheta}, we already know that $\op{Bl}(L)([x],[y])=-\theta(x,y)$. Next, Proposition \ref{prop:ComputeTheta} implies that $\theta(x,y)=\psi(x,y)=\frac{1}{r\overline{s}}\lambda(x_0,y_0)$. Summarizing, we have
\begin{equation}
\label{eq:BlanchfieldEqualPsi}
 \op{Bl}(L)([x],[y])=-\theta(x,y)=-\psi(x,y)=-\frac{1}{r\overline{s}}\lambda(x_0,y_0).
 \end{equation}
Note that any choice of $x_0,y_0$ will do since $\lambda$ vanishes on $\ker(i)$; this was already noticed in the definition of $\psi$. Furthermore, note that (\ref{eq:BlanchfieldEqualPsi}) holds independently of the chosen representatives $x$ and $y$ for the classes $[x]$ and $[y]$. 
Indeed if $x$ and $x'$ represent $[x]$, we claim that $\psi(x,y)$ and $\psi(x',y)$ coincide in $Q/\Lambda_S$, i.e. that $\psi(x-x',y)$ lies in $\Lambda_S$; the same proof will hold for the second variable. 
Since $x$ and $x'$ both represent $[x]$, there is a $v$ in $H_2(W;\Lambda_S)$ for which $x-x'=i(v)$. 
Note that $i(r v)=r i(v)$. 
Picking $y_0$ such that $i(y_0)=s y$ and using the definition of $\lambda$, the following equalities prove our claim, since the rightmost term lies in~$\Lambda_S$:
$$ \psi(x-x',y)= \psi(i(v),y)=\frac{1}{r\overline{s}} \lambda(r v, y_0)=\frac{1}{\overline{s}} \langle (\text{PD} \circ i)(y_0),v \rangle=\langle \text{PD}(y),v \rangle.$$
Using \cite[Theorem 1.3]{ConwayFriedlToffoli}, we know that there are bases with respect to which the intersection pairing $\lambda$ on $H_2(W;\Lambda_S)$ is represented by the C-complex matrix $H$ described in the introduction. Furthermore, with respect to the same bases, it was observed in \cite[Section 5.2]{ConwayFriedlToffoli} that the map $i$ is represented by $\overline{H}=H^T$. Consequently, Equation (\ref{eq:BlanchfieldEqualPsi}) can be reformulated as follows. 
Let $n$ denote the rank of the $\Lambda_S$-module $H_2(W;\Lambda_S)$. 
Given $[x],[y] \in TH_1(X_L;\Lambda_S)$, we have $\op{Bl}(L)([x],[y])=-\frac{1}{r\overline{s}} x_0^T H \overline{y_0}$ for any choice of $x_0,y_0 \in \Lambda_S^n$ and $r,s \in \Lambda_S$ such that $\overline{H}x_0=r x$ and $\overline{H}y_0=s y$. Using the notations of the introduction, this can be written as
$$\op{Bl}(L)([x],[y])=-\lambda_H([x],[y]).$$
Up to now, we always supposed that $W$ arose by pushing in a totally connected C-complex. Thus, \emph{a priori}, Theorem \ref{thm:main} only holds for C-complex matrices which arise from totally connected C-complexes. To conclude the proof of Theorem \ref{thm:main}, it therefore only remains to check that the pairing $\lambda_H$ is independent of the choice of a C-complex for $L$.

As explained in~\cite[p.~1230]{CimasoniFlorens} (see also~\cite{CimasoniPotential}), if~$F$ and~$F'$ are two C-complexes for isotopic links, then the corresponding C-complex matrices~$H$
and~$H'$ are related by a finite number of the following two moves:
\[
H\mapsto H\oplus(0) \quad \text{and} \quad H\mapsto \begin{pmatrix}
 H &\xi& 0 \\ \xi^*&\lambda&\alpha\\0&\overline{\alpha}&0 
\end{pmatrix}\,,
\]
with~$\alpha$ a unit of~$\Lambda_S$. In the first case, the~$\Lambda_S$-module~$\Lambda_S^n/\overline{H}\Lambda_S^n$ picks up a free rank~$1$ factor, so its torsion submodule is left
unchanged. It can then be checked that~$\lambda_H$ and~$\lambda_{H\oplus(0)}$ are canonically isometric. In the second case, since~$\alpha$ is a unit in~$\Lambda_S$,
one can assume via the appropriate base change that~$H$ is transformed into~$H\oplus\left(\begin{smallmatrix}0&1\\1&0\end{smallmatrix}\right)$. One can then once again check that the forms associated to these two hermitian matrices are canonically isometric.
\end{proof}

The proof of Theorem \ref{thm:main} is now completed. However, we wish to emphasize an argument which we shall use again later on.
\begin{remark}
\label{rem:LambdaSPresentation}
It follows from \cite[Corollary 3.6]{CimasoniFlorens} (see also  \cite[Theorem 1.1]{ConwayFriedlToffoli}) that $\overline{H}$ presents the $\Lambda_S$-coefficient Alexander module $H_1(X_L;\Lambda_S)$ if $H$ is a C-complex matrix which arises from a totally connected C-complex. However, as we saw in the proof of Theorem \ref{thm:main}, $\text{Tor}_{\Lambda_S}(\Lambda_S^n / \overline{H} \Lambda_S^n)$ is (possibly non-naturally) isomorphic to $TH_1(X_L;\Lambda_S)$ for \emph{any} C-complex matrix $H$. Furthermore, the same argument shows that $\overline{H}$ actually presents $H_1(X_L;\Lambda_S)$ under the weaker hypothesis that $H$ is a C-complex matrix which arises from a \emph{connected} C-complex. Indeed, the transformation $H \mapsto H \oplus (0)$ only arises when one wishes to connect two disconnected components of a C-complex, see \cite[page 1230]{CimasoniFlorens}.
\end{remark}

\section{Applications}
\label{sec:Applications}

In this section, we provide several applications of Theorem \ref{thm:main}. First, in Subsection \ref{sub:NonSing} we give a new proof that the Blanchfield pairing is hermitian. Then, in Subsection \ref{sub:Operations} we give quick proofs of some elementary properties of the Blanchfield pairing. Finally, in Subsection~\ref{sub:Boundary} we apply Theorem \ref{thm:main} to boundary links.

\subsection{The Blanchfield pairing is hermitian}
\label{sub:NonSing}
In this subsection, we prove Corollary \ref{cor:NonSingular}, which states that the Blanchfield pairing is hermitian.
Using Theorem~\ref{thm:main}, this reduces to showing the corresponding statement for $\lambda_H$, where $H$ is any C-complex matrix for $L$. Since this is a purely algebraic statement, we shall prove it in a somewhat greater generality. 
\medbreak
Let~$R$ be an integral domain with involution and let $Q(R)$ be its field of fractions. Given an $R$-module $V$, a pairing $b \colon V \times V \to Q(R)/R$ is \emph{sesquilinear} if it is linear in the first entry and antilinear in the second entry. A sesquilinear pairing $b$ is \emph{non-degenerate} (respectively \emph{nonsingular}) if the adjoint map $ V \to  \overline{\op{Hom}_R(V,Q(R)/R)}, \ 
p  \mapsto  (q \mapsto \lambda(q,p)) $
is a monomorphism (respectively an isomorphism) and  \emph{hermitian} if $\overline{\lambda(w,v)}=\lambda(v,w)$ for any $v,w\in V$.

%From now on, we make the additional assumption that $R$ is Noetherian and factorial. 
Let~$H$ be a hermitian~$n\times n$-matrix over~$R$. Given classes~$[v]$ and $[w]$ in $\text{Tor}_{R}(R^n/\overline{H}R^n)$, there exists~$v_0, w_0 \in R^n$ and $r,s \in R$ such that~$r v=\overline{H}v_0$ and $s w=\overline{H}w_0$. Proposition \ref{prop:form} will show that setting 
$$\lambda_H([v],[w]):=\frac{1}{r\overline{s}}v_0^T H \overline{w_0}$$
gives rise to a well-defined, hermitian pairing on $\text{Tor}_{R}(R^n/\overline{H}R^n)$.

% Before proving this result, we explain its connection to the Blanchfield pairing.
%
%\begin{remark}
%\label{rem:HAndTorsion}
%Let $M$ be an $R$-module. For~$k \ge 0$, let~$\Delta^{(k)}(M)$ denote the greatest common divisor of all~$(m-k) \times (m-k)$ minors of an~$m\times n$ presentation matrix of~$M$. Using $r$ to denote the rank of $M$, it is known that the order of $\text{Tor}_R(M)$ is equal to $\Delta^{(r)}(M)$, see \cite[Lemma 4.9]{Turaev}. If $M$ is presented by a hermitian matrix $H$, the above discussion and the equality $\overline{H}=H^T$ guarantee that $\overline{\Delta}=\Delta$.
%
%Taking $R$ to be $\Lambda_S$ and $H$ to be a C-complex matrix for a link $L$, we now claim that $\Delta$ is equal to $\Delta_L^{tor}(L)$, the first non-vanishing Alexander polynomial of $L$ over $\Lambda_S$. First of all, note that while the $\Lambda_S$-module $\Lambda_S^{n}/\overline{H} \Lambda_S^{n}$ may not be equal to $H_1(X_L;\Lambda_S)$, their torsion parts agree, see Remark \ref{rem:LambdaSPresentation}. The claim now follows from the fact that the order of $TH_1(X_L;\Lambda_S)$ is equal to the first non-vanishing Alexander polynomial of $L$, as mentioned above.
%%see \cite[Lemma 4.9]{Turaev}. 
%\end{remark}

Combining Theorem \ref{thm:main} with 
%Remark \ref{rem:HAndTorsion}, 
the following proposition 
%(which was suggested by David Cimasoni) 
%will 
immediately implies Corollary~\ref{cor:NonSingular}.

\begin{proposition}\label{prop:form}
The assignment~$(v,w)\mapsto \frac{1}{r\overline{s}}v_0^T H\overline{w_0}$ induces a well-defined pairing
\[
\lambda_H \colon \text{Tor}_{R}(R^n/\overline{H}R^n)\times \text{Tor}_{R}(R^n/\overline{H}R^n)\to Q/R
\]
which is hermitian. 
Furthermore, if~$\det(H)$ is non-zero, then this form coincides with the pairing~$(v,w)\mapsto v^T H^{-1}\overline{w}$.
\end{proposition}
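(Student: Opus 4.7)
The plan is to verify in turn well-definedness, the hermitian property, and agreement with the formula $v^T H^{-1}\overline{w}$ when $\det(H)\neq 0$. The key algebraic facts are that $H$ is hermitian (so $H^T = \overline{H}$) and that $\overline{\Delta} = \Delta$, which was recorded in Remark \ref{rem:HAndTorsion}. The fundamental identity underlying everything is that conjugating $\overline{H} w_0 = \Delta w$ and using $\overline{\overline{H}} = H$ together with $\overline{\Delta}=\Delta$ yields $H \overline{w_0} = \Delta \overline{w}$. In particular, $\tfrac{1}{\Delta^2} v_0^T H \overline{w_0}=\tfrac{1}{\Delta} v_0^T \overline{w}$, which shows immediately that the value lies in $\Delta^{-1}R/R$.

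For well-definedness I would first address independence of the auxiliary choices $v_0$ and $w_0$. If $v_0$ and $v_0'$ both satisfy $\overline{H} v_0 = \overline{H} v_0' = \Delta v$, then $(v_0 - v_0')^T H = 0$: indeed $\overline{H}(v_0 - v_0') = 0$ combined with $H^T=\overline{H}$ gives $(v_0-v_0')^T H = \bigl(H^T(v_0-v_0')\bigr)^T=0$. The analogous argument handles $w_0$ via $H\overline{(w_0-w_0')}=\overline{\overline{H}(w_0-w_0')}=0$. Next, for independence of the representative $v$: if $v' = v + \overline{H} u$ for some $u\in R^n$, then $v_0':= v_0 + \Delta u$ satisfies $\overline{H} v_0' = \Delta v'$, and using the fundamental identity,
\[
\tfrac{1}{\Delta^2}(v_0')^T H \overline{w_0} - \tfrac{1}{\Delta^2} v_0^T H \overline{w_0} = \tfrac{1}{\Delta} u^T H \overline{w_0} = u^T \overline{w} \in R,
\]
and similarly for $w$.

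The hermitian property is a short calculation: since the scalar $w_0^T H \overline{v_0}$ satisfies $\overline{w_0^T H \overline{v_0}} = \overline{w_0}^T \overline{H} v_0 = \overline{w_0}^T H^T v_0 = v_0^T H \overline{w_0}$ by hermiticity, and since $\overline{\Delta}=\Delta$, we get $\overline{\lambda_H([w],[v])} = \lambda_H([v],[w])$. For the last assertion, when $\det(H)\neq 0$, taking the transpose of $\overline{H} v_0=\Delta v$ and using $\overline{H}^T = H$ (hermiticity again) yields $v^T = \tfrac{1}{\Delta} v_0^T H$; combined with $\overline{w} = \tfrac{1}{\Delta} H \overline{w_0}$ from the fundamental identity, a direct substitution gives $v^T H^{-1} \overline{w} = \tfrac{1}{\Delta^2} v_0^T H \overline{w_0}$.

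I expect no genuine obstacle: the entire argument is a sequence of applications of the fundamental identity $H\overline{w_0}=\Delta\overline{w}$ together with the interplay $H^T=\overline{H}$ and $\overline{\Delta}=\Delta$. The only real subtlety is bookkeeping — keeping straight when to apply conjugation, transposition, or hermiticity, and ensuring that the appeal to $\overline{\Delta}=\Delta$ is justified via Remark \ref{rem:HAndTorsion}. Once that identity is in place, each of the three verifications collapses to a one-line computation.
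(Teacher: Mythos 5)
Your proof is correct and follows essentially the same route as the paper: direct verification of independence of $v_0,w_0$ and of the representatives using $H^T=\overline{H}$ and $\overline{\Delta}=\Delta$, the same one-line hermitian computation, and the same substitution argument when $\det(H)\neq0$ (the paper solves $v_0=\Delta\overline{H}^{-1}v$ rather than transposing, but the computation is identical). Your explicit identity $H\overline{w_0}=\Delta\overline{w}$, giving $\lambda_H([v],[w])=\tfrac{1}{\Delta}v_0^T\overline{w}$, is a nice way to make the $\Delta^{-1}R/R$-valuedness transparent, which the paper leaves implicit.
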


\begin{proof}
Let us first check that this definition is independent of the choice of~$v_0 \in R^n$ and $r \in R$ such that~$r v=\overline{H}v_0$. 
%Any other choice is of the form~$v_0+k$ with~$k$ in $R^n$ such that~$\overline{H}k=0$.  \smargin{TO DO: fix}
%Since~$H$ is hermitian, we have the equalities
%$$ \frac{1}{\Delta^2}k^T H\overline{w_0}=\frac{1}{\Delta^2}(\overline{H}k)^T\overline{w_0}=0\,, $$
%which give the result. 
If there are $v_0' \in R^n$ and $r' \in R$ such that $r'v=\overline{H}v_0'$, then
$$ \frac{1}{r'\overline{s}}{v_0'}^T H\overline{w_0}-\frac{1}{r\overline{s}}v_0^T H\overline{w_0}
=\frac{1}{\overline{s}} \left( \overline{H}\left( \frac{v_0'}{r'}-\frac{v_0}{r} \right) \right)^T  \overline{w_0}
=\frac{1}{\overline{s}} \left( v-v \right)^T  \overline{w_0}=0.
$$
\color{black}
A similar argument shows that the definition is independent of the choice of~$w_0,s$ such that~$s w=\overline{H}w_0$. 
Next, let us check that it does not depend on the choice of~$v$ representing the class~$[v]$. 
Any other choice is of the form~$v+\overline{H}u$
where~$u$ lies in $R^n$; since~$r(v+\overline{H}u)=\overline{H}(v_0+r u)$, the element
$$ \frac{1}{r\overline{s}}(v_0+ru)^TH\overline{w}_0-\frac{1}{r\overline{s}}v_0^TH\overline{w}_0=\frac{1}{r\overline{s}}(r u)^TH\overline{w_0}=\frac{1}{\overline{s}}u^TH\overline{w_0}=u^T\overline{w} $$
belongs to~$R$, so the class in~$Q(R)/R$ is indeed well-defined. A similar argument shows that it does not depend on the choice of~$w$ representing the class~$[w]$, thus concluding the proof that~$\lambda_H$ is well-defined. The fact that~$\lambda_H$ is sesquilinear is clear, and it is hermitian since~$H$~is.

To show the second claim, first note that if~$\det(H)$ is non-zero, then~$H$ is invertible over~$Q(R)$ so the equation~$r v=\overline{H}v_0$ is equivalent to~$v_0=r\overline{H}^{-1} v$ (and similarly for~$w_0$). Replacing~$v_0$ and~$w_0$ by these values and using the fact that~$H$ is hermitian, we see that $\lambda_H$ indeed coincides with~$(v,w)\mapsto v^T H^{-1}\overline{w}$. This concludes the proof of the proposition.
\end{proof}

\subsection{Some properties of the Blanchfield pairing}
\label{sub:Operations}
%In this subsection, we use Theorem \ref{thm:main} to obtain quick proofs of several properties of the Blanchfield.
%%Be more precise with $\Lambda_i \otimes_\Lambda_i \Lambda$
%\medbreak
Let~$R$ be an
%Noetherian factorial 
integral domain with involution. Before dealing with the properties of the Blanchfield pairing, we start by investigating the behavior of $\lambda_H$ under direct sums and multiplication by norms.
\begin{lemma}
\label{lem:Additivity}
Let $H_1,\ldots, H_\mu$ and $H$ be hermitian matrices and let $u$ be a unit of $R$.
\begin{enumerate}
\item Setting $B:=H_1 \oplus \ldots \oplus H_\mu$, one has $\lambda_B=\bigoplus_{i=1}^\mu \lambda_{H_i}.$
%\begin{equation}
%\label{eq:Additivity}
% \lambda_H=\sum_{i=1}^\mu \lambda_{H_i}.
%\end{equation}
\item The pairings $\lambda_{u \overline{u} H}$ and $\lambda_H$ are isometric.
\end{enumerate}
\end{lemma}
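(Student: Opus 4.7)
The plan is to prove both statements by direct computation from the definition of $\lambda_H$ given in Proposition~\ref{prop:form}, using that the involution preserves units.

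For part (1), I would first observe that $\overline{B}=\overline{H_1}\oplus\cdots\oplus\overline{H_\mu}$ is also block diagonal, so the module $R^n/\overline{B}R^n$ splits as the direct sum $\bigoplus_i R^{n_i}/\overline{H_i}R^{n_i}$, and its torsion submodule splits correspondingly. Writing $\Delta_i$ for the order of $\text{Tor}_R(R^{n_i}/\overline{H_i}R^{n_i})$, the order of $\text{Tor}_R(R^n/\overline{B}R^n)$ is $\Delta = \prod_i \Delta_i$ up to a unit. Given torsion classes $[v]=([v^{(1)}],\ldots,[v^{(\mu)}])$ and $[w]=([w^{(1)}],\ldots,[w^{(\mu)}])$, pick $v_i^{*}, w_i^{*}$ with $\Delta_i v^{(i)}=\overline{H_i} v_i^{*}$ and $\Delta_i w^{(i)}=\overline{H_i}w_i^{*}$. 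Then $v_0^{(i)}:=(\Delta/\Delta_i)v_i^{*}$ and the analogous $w_0^{(i)}$ satisfy $\overline{B}v_0=\Delta v$ and $\overline{B}w_0=\Delta w$. A short computation then gives
\[
\lambda_B([v],[w])=\tfrac{1}{\Delta^2}\sum_i(v_0^{(i)})^T H_i\,\overline{w_0^{(i)}}=\sum_i\tfrac{1}{\Delta_i^2}(v_i^{*})^T H_i\,\overline{w_i^{*}}=\sum_i\lambda_{H_i}([v^{(i)}],[w^{(i)}]),
\]
which is the desired formula.

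For part (2), I would first note that since $u$ is a unit in $R$ so is $\bar{u}$ (applying the involution to $uu^{-1}=1$), and hence $u\bar{u}$ is a unit. Consequently, $\overline{u\bar{u}H}\,R^n = u\bar{u}\,\overline{H}R^n = \overline{H}R^n$, so $R^n/\overline{H}R^n$ and $R^n/\overline{u\bar{u}H}R^n$ are literally the same $R$-module and hence have the same torsion submodule and the same order $\Delta$. I would then define the candidate isometry $\phi\colon\text{Tor}_R(R^n/\overline{H}R^n)\to \text{Tor}_R(R^n/\overline{u\bar{u}H}R^n)$ by $\phi([v])=[uv]$; this is an $R$-linear isomorphism because $u$ is a unit. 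To verify it preserves the pairings, starting from $\Delta v=\overline{H}v_0$, I would check that $\Delta(uv)=u\bar{u}\overline{H}(\bar{u}^{-1}v_0)$, so a valid representative for the pair $(\phi[v],\phi[w])$ with respect to the matrix $u\bar{u}H$ is $(\bar{u}^{-1}v_0,\bar{u}^{-1}w_0)$. Substituting into the definition and using $\overline{\bar{u}^{-1}}=u^{-1}$ yields
\[
\lambda_{u\bar{u}H}(\phi[v],\phi[w])=\tfrac{1}{\Delta^2}\bar{u}^{-1}(u\bar{u})u^{-1}v_0^T H\,\overline{w_0}=\tfrac{1}{\Delta^2}v_0^T H\,\overline{w_0}=\lambda_H([v],[w]),
\]
as desired.

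Neither step should present serious difficulty; the only delicate point I anticipate is bookkeeping the factors arising from the involution on $u$ in part (2), in particular getting $\overline{\bar{u}^{-1}}=u^{-1}$ placed correctly so that the three unit factors $\bar{u}^{-1}$, $u\bar{u}$, $u^{-1}$ telescope. Part (1) is essentially formal once the splitting of the torsion submodule and the product formula $\Delta=\prod\Delta_i$ are in hand.
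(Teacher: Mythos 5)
Your argument is correct and follows essentially the same route as the paper: part (1) via the splitting of the torsion submodule, the product formula for the orders, and the lifts $(\Delta/\Delta_i)v_i^{*}$; part (2) via multiplication by the unit (you use $v\mapsto uv$, the inverse of the paper's $v\mapsto u^{-1}v$, and check the isometry directly on lifts rather than via sesquilinearity, which is an immaterial difference). No gaps to report.
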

\begin{proof}
We start by proving the first assertion. Assume that each $H_i$ is of size $k_i$, set $k:=k_1 + \ldots +k_\mu$ and observe that $R^k/\overline{B} R^k$ is equal to $R^{k_1} / \overline{H_1} R^{k_1} \oplus R^{k_2} / \overline{H_2} R^{k_2} \oplus \ldots \oplus R^{k_\mu} / \overline{H_\mu} R^{k_\mu}$. 
%Since the torsion of the latter direct sum is equal to the direct sum of the torsion of the $R^{k_i} / \overline{H_i} R^{k_i}$, it follows that the order of $\text{Tor}_R(R^k/\overline{B} R^k)$ is equal to the product of the orders of the $\text{Tor}_R(R^{k_i} / \overline{H_i} R^{k_i})$. We shall write this as $\Delta =\Delta_1 \cdots \Delta_\mu$, where $\Delta_i$ denotes the order of $\text{Tor}_R(R^{k_i} / \overline{H_i} R^{k_i})$.

Next, we compute the sum of the $\lambda_{H_i}$. Let $x=x^1 \oplus x^2 \oplus \ldots \oplus x^\mu$ and $y=y^1 \oplus y^2 \oplus \ldots \oplus y^\mu$ be torsion elements in $R^k/\overline{B} R^k$. Relying on the previous paragraph, the $x^i$ and $y^i$ are torsion in $R^{k_i} / \overline{H_i} R^{k_i}$, and so there exists $x_0^i,y_0^i$ and $r_i,s_i \in R$ which satisfy $\overline{H_i} x_0^i=r_i x^i$ and $\overline{H_i} y_0^i=s_i y^i$. Thus, by definition we have
\begin{equation}
\label{eq:ProofAdditivity}
\bigoplus_{i=1}^\mu \lambda_{H_i}(x,y)= \sum_{i=1}^\mu \frac{1}{r_i\overline{s}_i}  (x_0^i)^T H_i \overline{y_0^i}.
 \end{equation}
In order to compute $\lambda_B$ and conclude the proof we proceed as follows. 
First, set $r:=\prod_{i=1}^\mu r_i$ and~$s:=\prod_{i=1}^\mu s_i$. Then define
%We define an element 
$x_0$ in $R^k/\overline{B} R^k$ by requiring its i-th component to be equal to $r\cdot r_i^{-1} x_0^i$. 
This way, the $i$-th component of $\overline{B} x_0$ is $\overline{H_i} (r\cdot r_i^{-1} x_0^i)=r\cdot r_i^{-1} \overline{H_i} x_0^i=r x^i$ and thus $\overline{B} x_0=r x$. We can therefore use $x_0$ and $y_0$ to compute $\lambda_B(x,y)$ and we get 
$$ \lambda_B (x,y)=\frac{1}{r\overline{s}} x_0^T B \overline{y_0}=\frac{1}{r\overline{s}} \sum_{i=1}^n (r \cdot r_i^{-1} x_0^i)^T H_i \overline{(s \cdot s_i^{-1} y_0^i)}= \sum_{i=1}^n  \frac{1}{r_i\overline{s}_i}(x_0^i)^T H_i \overline{y_0^i},$$
which agrees with (\ref{eq:ProofAdditivity}). This concludes the proof of the first statement.

To deal with the second statement, first observe that since $u$ is a unit, so are $\overline{u}$ and $u \overline{u}$. Consequently $R^n/\overline{H} R^n$  is equal to $R^n /(u \overline{u}\overline{H}) R^n$ and thus the corresponding torsion submodule supports both the pairings $\lambda_H$ and $\lambda_{u\overline{u}H}$. To prove the assertion, we wish to show that the automorphism $\varphi$ defined by sending $x$ to $u^{-1} x$ provides the desired isometry. To see this, start with torsion elements $x$ and $y$ in the cokernel of $H$ and let $x_0, y_0$ be such that $ \overline{H} x_0=r x$ and $\overline{H}y_0=s y$ with $r,s \in R$.  Since $u$ is a unit, $(u\overline{u})^{-1}$ lies in $R$, and thus $(u \overline{u} \overline{H})((u \overline{u})^{-1} x_0)=r x$ and similarly for $y$. It follows that
$$\lambda_{u \overline{u}H}(x,y)
=\frac{1}{r\overline{s}} ((u \overline{u})^{-1} x_0)^T (u \overline{u} H) \overline{((u \overline{u})^{-1} y_0)} 
=( u \overline{u})^{-1} \frac{1}{r\overline{s}} x_0^T H \overline{y_0}
=( u \overline{u})^{-1} \lambda_H(x,y).
$$
On the other hand, the sesquilinearity of $\lambda_H$ immediately implies that $\lambda_{H} (\varphi(x),\varphi(y))=\lambda_{H} (u^{-1} x,u^{-1} y) =( u \overline{u})^{-1} \lambda_H(x,y)$. Consequently $\lambda_H $ and $\lambda_{u \overline{u} H}$ are isometric, which concludes the proof.
\end{proof}

We can now apply Lemma \ref{lem:Additivity} to obtain some results on the Blanchfield pairing. Before that however, given a C-complex $F$  for a $\mu$-colored link and a sequence~$\eps=(\eps_1,\dots,\eps_\mu)$ of~$\pm 1$'s, we briefly recall some terminology. Pushing curves off $F_i$ in the~$\eps_i$-normal direction for $i=1,\ldots,\mu$ produces a map $i_\varepsilon \colon H_1(F)\to H_1(S^3\setminus F)$. The assignment $\alpha^\varepsilon(x,y):= \op{lk} (i_\eps(x),y)$ gives rise to a bilinear pairing on $H_1(F)$ and thus to a \emph{generalized Seifert matrix} $A^\varepsilon$ for $L$. We refer to \cite{CimasoniPotential, CimasoniFlorens, Cooper} for details.

In the next two propositions, we shall use $\op{Bl}(L)(t_1,\ldots,t_\mu)$ to denote the Blanchfield pairing of a $\mu$-colored link and similarly for the C-complex matrices.

\begin{proposition}
\label{prop:ConnectedSum}
Let $L'=L_1 \cup \ldots \cup L_{\nu-1} \cup L'_\nu$ and $L''=L_\nu'' \cup L_{\nu+1} \cup \ldots \cup L_\mu$ be two colored links. Consider a colored link $L=L_1 \cup \ldots \cup L_\mu $, where $L_\nu$ is a connected sum of $L_\nu'$ and $L_\nu''$ along any of their components. Then $ \op{Bl}(L)(t_1,\ldots,t_\mu)$ is isometric to $\op{Bl}(L')(t_1,\ldots,t_\nu) \oplus \op{Bl}(L'')(t_\nu,\ldots,t_\mu).$
\end{proposition}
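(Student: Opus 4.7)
The plan is to apply Theorem \ref{thm:main} to all three Blanchfield pairings and to reduce the claim to a block decomposition of C-complex matrices, with the extra unit-norm factors that appear absorbed via Lemma \ref{lem:Additivity}(2).

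First, I would choose C-complexes $F' = F'_1 \cup \cdots \cup F'_\nu$ for $L'$ and $F'' = F''_\nu \cup \cdots \cup F''_\mu$ for $L''$. Since $L$ is a connected sum, I can isotope $L'$ and $F'$ into a $3$-ball $B'$ and $L''$ and $F''$ into a complementary $3$-ball $B''$ so that the connected-sum sphere $S = \partial B' = \partial B''$ is disjoint from all $F'_i$ and $F''_j$. The connected sum then amounts to attaching a thin band $b \subset S$ joining the chosen components of $L'_\nu$ and $L''_\nu$, chosen to avoid every other surface. Setting $F_\nu := F'_\nu \cup b \cup F''_\nu$, $F_i := F'_i$ for $i < \nu$, and $F_i := F''_i$ for $i > \nu$ produces a C-complex $F = F_1 \cup \cdots \cup F_\mu$ for $L$ with no new clasps.

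Next, I would decompose the generalized Seifert matrices. Since $F_\nu$ is a boundary-connected sum, one has $H_1(F) = H_1(F') \oplus H_1(F'')$. In a basis compatible with this splitting, 1-cycles of $F'$ and their pushoffs $i_\varepsilon$ can be arranged to remain in $B'$, and symmetrically for $F''$, so the linking numbers between cycles from the two pieces vanish. For each $\varepsilon = (\varepsilon_1, \ldots, \varepsilon_\mu) \in \{\pm 1\}^\mu$ this yields the block decomposition
\[
A^\varepsilon \,=\, A'^{\,(\varepsilon_1, \ldots, \varepsilon_\nu)} \,\oplus\, A''^{\,(\varepsilon_\nu, \ldots, \varepsilon_\mu)}.
\]
Since $A'^{\,(\varepsilon_1,\ldots,\varepsilon_\nu)}$ is independent of $\varepsilon_{\nu+1},\ldots,\varepsilon_\mu$ (and symmetrically for $A''$), summing $H = \sum_\varepsilon \prod_{i=1}^\mu (1-t_i^{\varepsilon_i}) A^\varepsilon$ and using the identity $(1-t_i)(1-t_i^{-1}) = (1-t_i)\overline{(1-t_i)}$ gives
\[
H \,=\, u' \cdot H' \,\oplus\, u'' \cdot H'',
\]
where $u' = \prod_{i=\nu+1}^\mu (1-t_i)\overline{(1-t_i)}$, $u'' = \prod_{i=1}^{\nu-1} (1-t_i)\overline{(1-t_i)}$, and $H'$, $H''$ denote the C-complex matrices of $F'$ and $F''$ viewed over $\Lambda_S$.

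Finally, Theorem \ref{thm:main} identifies $\op{Bl}(L)$, $\op{Bl}(L')$ and $\op{Bl}(L'')$ with $-\lambda_H$, $-\lambda_{H'}$ and $-\lambda_{H''}$ up to isometry. Since each $(1-t_i)$ is by construction a unit of $\Lambda_S$, Lemma \ref{lem:Additivity}(2) gives $\lambda_{u' H'} \cong \lambda_{H'}$ and $\lambda_{u'' H''} \cong \lambda_{H''}$, and Lemma \ref{lem:Additivity}(1) then gives $\lambda_H \cong \lambda_{H'} \oplus \lambda_{H''}$, which is the claimed isometry. The main subtlety is the appearance of the factors $u'$ and $u''$: without them the block decomposition of $H$ would not literally equal $H' \oplus H''$, which is precisely why the norm-invariance of Lemma \ref{lem:Additivity}(2) is needed here. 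The only other point to check is that the pushoffs defining $i_\varepsilon$ can be arranged to stay in their respective $3$-balls, which is geometrically clear once $F'$ and $F''$ are placed on opposite sides of $S$.
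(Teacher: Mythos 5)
Your proposal is correct and follows essentially the same route as the paper: band-summing the two C-complexes, observing the block decomposition $A^\varepsilon = A'^{\varepsilon'} \oplus A''^{\varepsilon''}$ so that $H = u'H' \oplus u''H''$ with $u', u''$ products of norms of units of $\Lambda_S$, and then concluding via Theorem \ref{thm:main} together with both parts of Lemma \ref{lem:Additivity}.
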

%Be more precise with $\otimes_\Lambda_i \Lambda$.
\begin{proof}
Denote $\prod_{i > \nu} (1-t_i^{-1})(1-t_i)$ by $u_1$ and $\prod_{i < \nu} (1-t_i^{-1})(1-t_i)$ by $u_2$. Given a C-complex $F'$ for $L'$ and a C-complex $F''$ for $L''$, a C-complex for $L$ is given by the band sum of $F'$ and $F''$ along the corresponding components of $F_\nu'$ and $F_\nu''$. Consequently, $A_F^\varepsilon=A^{\varepsilon'}_{F'} \oplus A^{\varepsilon''}_{F''}$, with $\varepsilon'=(\varepsilon_1,\ldots,\varepsilon_\nu)$ and $\varepsilon''=(\varepsilon_\nu,\ldots,\varepsilon_\mu)$. It follows that a C-complex matrix $H$ for $L$ is given by 
$$ H=u_1 H'(t_1,\ldots,t_\nu) \oplus u_2 H''(t_\nu,\ldots, t_\mu). $$
Denoting these matrices by $H'$ and $H''$, Theorem \ref{thm:main} and the first assertion of Lemma \ref{lem:Additivity} imply that $\op{Bl}(L)$ is isometric to $\lambda_{u_1H'}\oplus \lambda_{u_2 H''}$. Since $u_1$ and $u_2$ are of the form $u \overline{u}$ with $u$ a unit of $\Lambda_S$, the result follows from the second assertion of Lemma \ref{lem:Additivity}.
\end{proof}

\begin{figure}[!htb]
\includegraphics[scale=0.5]{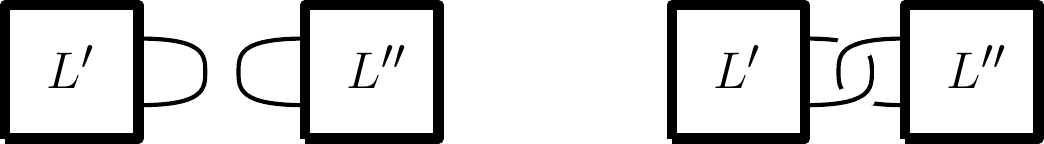}
\caption{Performing a trivial band clasping of the links $L'$ and $L''$}
\label{fig:BandClasp}
\end{figure}

A \emph{trivial band clasping} of two links is the operation depicted in Figure \ref{fig:BandClasp}. A proof similar to the one of Proposition \ref{prop:ConnectedSum} yield the following result.

\begin{proposition}
\label{prop:BandClaspDisjointUnion}
Let $L'=L_1 \cup \ldots \cup L_\nu$ and $L''=L_{\nu+1} \cup \ldots \cup L_\mu$ be colored links with disjoint sets of colors.
\begin{enumerate}
\item  Consider a colored link $L$ obtained by trivially band clasping $L_\nu$ and $L_{\nu+1}$ along any of their components. Then $ \op{Bl}(L)(t)$ is isometric to $\op{Bl}(L')(t') \oplus \op{Bl}(L'')(t'').$
\item Consider the colored link given by the disjoint sum of $L'$ and $L''$. Then $\op{Bl}(L)(t)$ is isometric to $\op{Bl}(L')(t') \oplus \op{Bl}(L'')(t'')$.
\end{enumerate}
\end{proposition}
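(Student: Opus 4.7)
The plan is to adapt the proof of Proposition \ref{prop:ConnectedSum}. For part (1), start with C-complexes $F'$ for $L'$ and $F''$ for $L''$ in disjoint $3$-balls, and form a C-complex $F$ for $L$ by performing the trivial band clasping on the two chosen components: this attaches a narrow band that meets the opposing surface along a single clasp, so that $H_1(F) \cong H_1(F') \oplus H_1(F'')$ via the obvious inclusions. Because the two subcomplexes remain in essentially disjoint regions of $S^3$, any generator of $H_1(F')$ pushed off in any normal direction can be arranged not to link any generator of $H_1(F'')$, and vice versa. The generalized Seifert matrices therefore split as block sums $A^\varepsilon_F = A'^{\varepsilon'}_{F'} \oplus A''^{\varepsilon''}_{F''}$, with $\varepsilon' = (\varepsilon_1,\ldots,\varepsilon_\nu)$ and $\varepsilon'' = (\varepsilon_{\nu+1},\ldots,\varepsilon_\mu)$.

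Using the disjointness of the color sets, the weight $\prod_{i=1}^\mu(1-t_i^{\varepsilon_i})$ factors as $\prod_{i\le\nu}(1-t_i^{\varepsilon_i})\cdot\prod_{i>\nu}(1-t_i^{\varepsilon_i})$. Summing over the unused indices in each block, and using the identity $\sum_{\varepsilon_i\in\{\pm 1\}}(1-t_i^{\varepsilon_i})=(1-t_i)(1-t_i^{-1})$, yields
$$H \;=\; u_1\, H'(t_1,\ldots,t_\nu)\,\oplus\, u_2\, H''(t_{\nu+1},\ldots,t_\mu),$$
where $u_1 = \prod_{i>\nu}(1-t_i)(1-t_i^{-1})$ and $u_2 = \prod_{i\le\nu}(1-t_i)(1-t_i^{-1})$. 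Since each $1-t_i$ is a unit in $\Lambda_S$, both $u_j$ are of the form $u\overline{u}$ with $u$ a unit. Theorem \ref{thm:main} combined with the two parts of Lemma \ref{lem:Additivity} then identifies $\op{Bl}(L)$ with $\lambda_{u_1H'}\oplus\lambda_{u_2H''}$, which is isometric to $\lambda_{H'}\oplus\lambda_{H''}$, and therefore to $\op{Bl}(L')\oplus\op{Bl}(L'')$. For part (2), one runs the identical argument with the disjoint-union C-complex $F = F'\sqcup F''$: the generalized Seifert matrices split in the same block form, this time simply because cycles in disjoint $3$-balls cannot link, and the algebraic manipulation and the appeal to Theorem \ref{thm:main} and Lemma \ref{lem:Additivity} carry over verbatim.

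The main obstacle, as in the proof of Proposition \ref{prop:ConnectedSum}, is justifying the block-diagonal decomposition $A^\varepsilon_F = A'^{\varepsilon'}_{F'} \oplus A''^{\varepsilon''}_{F''}$, and in particular checking for part (1) that the trivial band clasping does not introduce cross-linking between $H_1(F')$ and $H_1(F'')$. In both parts this reduces to choosing $H_1$-representatives that avoid the clasping band (respectively stay in their own ball), turning the required vanishing of cross linking numbers into a direct geometric check.
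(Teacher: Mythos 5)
Your proposal is correct and follows exactly the route the paper intends: the paper simply remarks that the proof is analogous to that of Proposition~\ref{prop:ConnectedSum}, and you carry out precisely that adaptation, using the block decomposition $A^\varepsilon_F = A'^{\varepsilon'}\oplus A''^{\varepsilon''}$, the identity $\sum_{\varepsilon_i=\pm1}(1-t_i^{\varepsilon_i})=(1-t_i)(1-t_i^{-1})$ to produce the unit factors $u_1,u_2$, and then Theorem~\ref{thm:main} with Lemma~\ref{lem:Additivity}. The geometric justification you give (representatives of $H_1(F')$ and $H_1(F'')$ confined to disjoint balls, with the trivial clasp contributing no cross-linking and no new homology) is exactly the point implicit in the paper's appeal to the connected-sum argument.
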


We conclude this subsection by studying the effect of orientation reversal and taking the mirror image.

\begin{proposition}
\label{prop:Mirror}
Let $L$ be a colored link.
\begin{enumerate}
\item If $\overline{L}$ denotes the mirror image of $L$, then $\op{Bl}(\overline{L})$ is isometric to $-\op{Bl}(L)$.
\item If $-L$ denotes $L$ with the opposite orientation, then $\op{Bl}(-L)$ is isometric to $\op{Bl}(L)$.
\end{enumerate}

\end{proposition}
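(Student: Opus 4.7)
The plan is to apply Theorem~\ref{thm:main} and, more specifically, the formula~(\ref{eq:BlanchfieldEqualPsi}) from its proof, which expresses the Blanchfield pairing directly via the $\Lambda_S$-valued intersection form $\lambda$ on the $4$-manifold $W$ obtained by pushing a totally connected C-complex into $D^4$. In both parts, I would compare $W$, its $\Lambda_S$-structure, and its intersection form for $L$ with those for $\overline{L}$ (respectively $-L$), in the spirit of Propositions~\ref{prop:ConnectedSum} and~\ref{prop:BandClaspDisjointUnion}.

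For part~(2), reversing the orientation of $L$ leaves the $3$-manifold $X_L$, the C-complex $F$, the pushed-in $4$-manifold $W$, and the underlying $\mathbb{Z}$-valued intersection pairing unchanged. Since $\varphi_{-L}=-\varphi_L$ has the same kernel as $\varphi_L$, the universal abelian covers coincide, but the deck action of $t_i$ is precomposed with the ring involution. Consequently the $\Lambda_S$-valued intersection form for the $-L$-structure is the conjugate of that for the $L$-structure under the natural identity-on-sets identification of the underlying $\Lambda_S$-modules $H_2(W;\Lambda_S)$. Equation~(\ref{eq:BlanchfieldEqualPsi}) then produces $\op{Bl}(-L)\cong\overline{\op{Bl}(L)}$, and the hermiticity of $\op{Bl}(L)$ (Corollary~\ref{cor:NonSingular}) absorbs the conjugation to yield the desired isometry.

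For part~(1), mirror reflection is an orientation-reversing diffeomorphism of $S^3$ which extends to $D^4$. Pushing in the mirror C-complex $\overline{F}$ yields a $4$-manifold $W_{\overline{L}}$ that equals $W_L$ with the reversed orientation. Mirror reflection additionally negates the linking-number homomorphism $\varphi$, so the $\Lambda_S$-structure is conjugated exactly as in part~(2). Combining the two effects, the intersection form on $H_2(W;\Lambda_S)$ for $\overline{L}$ becomes $-\overline{\lambda}$. Inserting this into~(\ref{eq:BlanchfieldEqualPsi}) and invoking hermiticity once more gives $\op{Bl}(\overline{L})\cong -\op{Bl}(L)$.

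The main technical obstacle is articulating the ``absorb the conjugation via hermiticity'' step cleanly within the paper's algebraic framework, since the natural geometric comparison map (identity on underlying sets) is conjugate-linear rather than $\Lambda_S$-linear. Equivalently, both parts can be recast at the level of C-complex matrices: a direct computation starting from the identity $A^\varepsilon_{\overline{F}}=-A^{-\varepsilon}_F$, together with a reindexing $\varepsilon\mapsto -\varepsilon$ in the defining sum, produces $H_{\overline{L}}=-\overline{H_L}$ and $H_{-L}=\overline{H_L}$. Combined with the elementary identity $\lambda_{-H}=-\lambda_H$ (immediate from Proposition~\ref{prop:form}, since $-1$ is a unit and the formula $v_0^T H\overline{w_0}/\Delta^2$ is linear in $H$), both statements then reduce to exhibiting an isometry $\lambda_{\overline{H}}\cong\lambda_H$ for any hermitian $H$; this is implemented by the conjugate-linear bijection $v\mapsto\overline{v}$ together with the hermitian symmetry of the two pairings, exactly mirroring the geometric argument sketched above.
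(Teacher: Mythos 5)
Your reduction to C-complex matrices is the same strategy as the paper's (the paper also proves the proposition by identifying a C-complex matrix for the mirror, respectively for $-L$, and then manipulating $\lambda_H$), but there is a genuine gap at exactly the step you yourself flag as the main obstacle. You claim that the required isometry $\lambda_{\overline{H}}\cong\lambda_H$ is ``implemented by the conjugate-linear bijection $v\mapsto\overline{v}$ together with the hermitian symmetry of the two pairings.'' An isometry of these sesquilinear $Q/\Lambda_S$-valued pairings must be a $\Lambda_S$-\emph{linear} isomorphism, whereas $v\mapsto\overline{v}$, viewed as a map $\Lambda_S^n/H\Lambda_S^n\to\Lambda_S^n/\overline{H}\Lambda_S^n$, is only semilinear; all it yields (using $\overline{\Delta}=\Delta$ and Proposition~\ref{prop:form}) is $\lambda_H(\overline{v},\overline{w})=\overline{\lambda_{\overline{H}}(v,w)}=\lambda_{\overline{H}}(w,v)$. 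That identifies $\lambda_{\overline{H}}$ with the conjugate (equivalently transposed) form of $\lambda_H$ on the conjugate module, which is precisely the observation one starts from when noting that reversing orientations conjugates the coefficient system $\varphi$; it is not the assertion that $\lambda_{\overline{H}}$ and $\lambda_H$ are isometric, and hermiticity does not formally upgrade a semilinear anti-isometry to a linear isometry. As you have set things up, \emph{both} parts hinge on this unproven step, since your mirror identity carries the extra conjugation, $H_{\overline{L}}=-\overline{H_L}$ (the natural generalization of $V\mapsto -V^T$ for knots), so part (1) does not follow from $\lambda_{-H}=-\lambda_H$ alone. (Also, $\lambda_{-H}=-\lambda_H$ is not ``linearity in $H$'': the lifts change as well, $\overline{(-H)}(-x_0)=\Delta x$, though the conclusion is correct.)

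For comparison, the paper's proof of part (1) avoids the conjugation issue entirely: it records $-H$ as a C-complex matrix for the mirror, notes that $H$ and $-H$ present the same module, and uses the genuinely $\Lambda_S$-linear automorphism $x\mapsto -x$, for which $\lambda_{-H}(-x,-y)=-\lambda_H(x,y)$; Theorem~\ref{thm:main} then gives $\op{Bl}(\overline{L})\cong-\op{Bl}(L)$ with no conjugate-linear map appearing. For part (2) the paper, like you, reduces to comparing $\lambda_{\overline{H}}$ with $\lambda_H$ via hermiticity, but in your write-up the needed $\Lambda_S$-linear isometry between $\operatorname{Tor}_{\Lambda_S}(\Lambda_S^n/H\Lambda_S^n)$ and $\operatorname{Tor}_{\Lambda_S}(\Lambda_S^n/\overline{H}\Lambda_S^n)$ is never produced; until it is, what you have established is only that $\op{Bl}(-L)$ (and, in your route, $-\op{Bl}(\overline{L})$) is the conjugate form of $\op{Bl}(L)$, which is weaker than the statement of the proposition.
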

\begin{proof}
If $F$ is a C-complex for $L$, then the mirror image $F'$ of $F$ is a C-complex for $\overline{F}$. It follows that $H'=-H$. Since these two matrices present the same module, the corresponding torsion submodule supports both $\lambda_H$ and $\lambda_{-H}$. We claim that the automorphism which sends $x$ to $-x$ gives the required isometry. Indeed, if $\overline{H} x_0=r x$ and $\overline{H} y_0=s y$, then $(-\overline{H})x_0=r(-x)$ and $(-\overline{H}) y_0=s (-y)$. Consequently $\lambda_{-H}(-x,-y)$ and $-\lambda_H(x,y)$ are both equal to $-x_0^TH\overline{y_0}$. The result now follows from Theorem \ref{thm:main}. The second assertion follows similarly by noting that a C-complex matrix for $-L$ is given by $\overline{H}$ and by using the fact that $\lambda_H$ is hermitian.
\end{proof} 

\subsection{Boundary links}
\label{sub:Boundary}

An $n$-component \textit{boundary link} is a link $L=K_1 \cup \ldots \cup K_n$ whose $n$ components bound $n$ disjoint Seifert surfaces $F_1, \ldots, F_n$. Set $F=F_1 \sqcup \ldots \sqcup F_n$. Pushing curves off this \emph{boundary Seifert surface} in the negative normal direction produces a homomorphism $i_- \colon H_1(F) \to H_1(S^3 \setminus F)$. The assignment $\theta(x,y):=\lk(i_-(x),y)$ gives rise to a pairing on $H_1(F)$ and to a \textit{boundary Seifert matrix for $L$}, see \cite[p.670]{Ko} for details. Since $H_1(F)$ decomposes as the direct sum of the $H_1(F_i)$, the restriction of $\theta$ to $H_1(F_i) \times H_1(F_j)$ produces matrices $A_{ij}$. For $i \neq j$, these matrices satisfy $A_{ij}=A_{ji}^T$, while  $A_{ii}$ is nothing but a Seifert matrix for the knot $K_i$. Let $g_i$ be the genus of $F_i$, let $I_k$ be the $k \times k$ identity matrix, let $\tau$ be the block diagonal matrix whose diagonal blocks are  $ t_1I_{2g_1}, t_2I_{2g_2},\ldots, t_n I_{2g_n}$ and set $g:=g_1+\ldots + g_n$. We use Theorem \ref{thm:main} in order to give a new proof of the following result, originally due to Hillman \cite[pages 122-123] {HillmanAlexanderIdeals}, see also \cite[Theorem 4.2]{CochranOrr}. 

\begin{theorem}
\label{thm:Boundary}
Let $L=K_1 \cup \ldots \cup K_n$ be a boundary link. Assume that $A$ is a boundary Seifert matrix for $L$ of size $2g$. The Blanchfield pairing of $L$ is isometric to 
\begin{align*}
\label{eq:Boundary}
\Lambda_S^{2g} / (A\tau -A^T) \Lambda_S^{2g} \times \Lambda_S ^{2g} / (A\tau -A^T) \Lambda_S^{2g} &\,\,\rightarrow\,\, Q/\Lambda_S \\
(a,b) &\,\,\mapsto\,\, a^T (A-\tau A^T)^{-1}(\tau-I_{2g})\overline{b}. \nonumber
\end{align*}
\end{theorem}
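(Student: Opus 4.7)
The plan is to apply Theorem~\ref{thm:main} using the disjoint boundary Seifert surface $F = F_1 \sqcup \cdots \sqcup F_n$ itself as a C-complex for $L$. Since the $F_i$ are disjoint and connected, this is a connected (though not totally connected) C-complex with no clasps, so the associated C-complex matrix $H$ computes $\op{Bl}(L)$ as $-\lambda_H$.

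The first step is to identify the generalized Seifert matrices $A^\varepsilon$ for this C-complex. Because $F_i \cap F_j = \emptyset$ for $i \neq j$, both pushoffs $i_\pm(x)$ of a cycle $x \in H_1(F_i)$ are homologous to $x$ in $S^3 \sm F_j$, so $A^\varepsilon_{ij} = A_{ij}$ is independent of $\varepsilon$ off the diagonal. On the diagonal, the standard identity $\lk(i_+(x),y) = \lk(x, i_-(y))$ yields $A^\varepsilon_{ii} = A_{ii}$ when $\varepsilon_i = -1$ and $A^\varepsilon_{ii} = A_{ii}^T$ when $\varepsilon_i = +1$. The second step, which carries the bulk of the work, is to assemble $H = \sum_\varepsilon \prod_k (1 - t_k^{\varepsilon_k}) A^\varepsilon$. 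A routine collection of the $2^n$ contributions (using $A_{ij} = A_{ji}^T$ for $i \neq j$) gives, with $\alpha_k := (1-t_k)(1-t_k^{-1})$ and $\alpha := \prod_k \alpha_k$, the formulas $H_{ij} = \alpha A_{ij}$ for $i \neq j$ and $H_{ii} = \alpha (1-t_i)^{-1}(A_{ii} - t_i A_{ii}^T)$. Factoring out $\alpha$ produces $H = \alpha H'$ with the compact closed form $H' = (I-\tau)^{-1}(A - \tau A^T)$. Since $\alpha = u \overline{u}$ for the unit $u := \prod_k (1-t_k)$ of $\Lambda_S$, Lemma~\ref{lem:Additivity}(2) gives $\lambda_H \cong \lambda_{H'}$, and by Theorem~\ref{thm:main} it suffices to compute $-\lambda_{H'}$.

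The final step is to identify the domain and the explicit formula of $\lambda_{H'}$. The relation $A_{ji}^T = A_{ij}$ makes it straightforward to verify that $H'$ is hermitian, so $\overline{H'} = H'^T$ and $\lambda_{H'}$ lives on $\text{Tor}_{\Lambda_S}(\Lambda_S^{2g}/H'^T \Lambda_S^{2g})$. A block-wise check shows $A\tau - A^T = H'^T(\tau - I)$; because $\tau - I$ is a block-diagonal matrix of units of $\Lambda_S$, this identifies the domain with $\text{Tor}_{\Lambda_S}(\Lambda_S^{2g}/(A\tau - A^T)\Lambda_S^{2g})$ as required. When $\det(A - \tau A^T) \neq 0$ (hence $H'$ invertible), Proposition~\ref{prop:form} supplies
$-\lambda_{H'}(a,b) = -a^T (H')^{-1}\overline{b} = -a^T (A - \tau A^T)^{-1}(I - \tau)\overline{b} = a^T(A - \tau A^T)^{-1}(\tau - I)\overline{b},$
matching the stated formula. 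The possibly singular case is handled uniformly by the intrinsic definition of $\lambda_{H'}$ given in Proposition~\ref{prop:form}.

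The principal obstacle will be the middle step: unpacking the $2^n$-term sum for $H$, pulling out the common norm $\alpha$, and recognizing the clean compact form $(I - \tau)^{-1}(A - \tau A^T)$. Once $H'$ is in hand, everything reduces to straightforward matrix manipulation that systematically exploits the off-diagonal symmetry $A_{ij} = A_{ji}^T$ and the fact that $\tau - I$ and the $1 - t_k$ are units in $\Lambda_S$.
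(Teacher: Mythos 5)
Your reduction is essentially the paper's: you take the disjoint union $F_1\sqcup\cdots\sqcup F_n$ as a C-complex, compute the generalized Seifert matrices blockwise, assemble $H=u\overline{u}\,(I_{2g}-\tau)^{-1}(A-\tau A^T)$ with $u=\prod_k(1-t_k)$, strip off the norm $u\overline{u}$ by Lemma~\ref{lem:Additivity}(2), and feed the result into Theorem~\ref{thm:main} and Proposition~\ref{prop:form}; all of these matrix identities are correct. (Minor slip: for $n\geq 2$ this C-complex is \emph{disconnected}, not ``connected''; this is harmless for applying Theorem~\ref{thm:main}, but it means you cannot invoke Remark~\ref{rem:LambdaSPresentation} to say $\overline{H}$ presents the Alexander module.)

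The genuine gap is your final step. The theorem asserts an isometry with a pairing defined on the \emph{whole} module $\Lambda_S^{2g}/(A\tau-A^T)\Lambda_S^{2g}$ by a formula involving $(A-\tau A^T)^{-1}$, so the statement only makes sense, and only follows from the second part of Proposition~\ref{prop:form}, once one knows $\det(A\tau-A^T)\neq 0$, equivalently that $\Lambda_S^{2g}/(A\tau-A^T)\Lambda_S^{2g}$ is $\Lambda_S$-torsion (so that it coincides with the domain $\operatorname{Tor}_{\Lambda_S}(\Lambda_S^{2g}/H'^T\Lambda_S^{2g})$ of $\lambda_{H'}$). Writing that ``the possibly singular case is handled uniformly by the intrinsic definition of $\lambda_{H'}$'' does not address this: in the singular case the claimed formula is meaningless and the claimed domain would be strictly larger than the torsion submodule, so the case must be \emph{ruled out}, not absorbed. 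This is not a formality: for a boundary link with $n\geq 2$ components the Alexander module $H_1(X_L;\Lambda_S)$ itself is not torsion, so no appeal to the link's Alexander polynomial works directly, and C-complex matrices of connected C-complexes for such links are in fact singular. The paper spends a paragraph on exactly this point: it trivially band-clasps the $F_i$ into a \emph{connected} C-complex, with the same matrix $H$, for a link $L'$ with nonvanishing linking numbers; the Torres formula gives $\Delta_{L'}\neq 0$, and Remark~\ref{rem:LambdaSPresentation} then shows $\overline{H}$ presents the torsion module $H_1(X_{L'};\Lambda_S)$, whence $\det H\neq 0$. (A quicker fix available to you: evaluating $A\tau-A^T$ at $t_1=\cdots=t_n=1$ gives $A-A^T$, the block sum of the unimodular intersection forms of the surfaces $F_i$, so $\det(A\tau-A^T)$ is nonzero.) Without some such argument your proof does not yield the theorem as stated.
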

\begin{proof}
Let $F$ be a boundary Seifert surface which gives rise to $A$. View $F$ as a C-complex for $L$, and use $A_{ij}^\varepsilon$ to denote the restriction of the generalized Seifert matrix $A^\varepsilon$ to $H_1(F_i) \times H_1(F_j)$. If $i \neq j$, since $L$ is a boundary link, $A_{ij}^\varepsilon$ is independent of $\varepsilon$ and is equal to the block $A_{ij}$ of the boundary Seifert matrix $A$. Similarly, for each $\varepsilon$ with $\varepsilon_i=-1$, the restriction of $A^\varepsilon$ to $H_1(F_i) \times H_1(F_i)$ is equal to the block $A_{ii}$. Let $H_i=(1-t_i)A_{ii}^T+(1-t_i^{-1})A_{ii}$ denote the corresponding C-complex matrix for the knot $K_i$ and let $u$ denote $\prod_{j=1}^n(1-t_j)$. The previous considerations show that a C-complex matrix $H$ for $L$ is given by 
$$
H=
\begin{bmatrix}
u\overline{u}(1-t_1)^{-1}(1-t_1^{-1})^{-1}H_1 &u\overline{u} A_{12} & \ldots & u\overline{u} A_{1n} \\
u\overline{u} A_{21} & u\overline{u}(1-t_2)^{-1}(1-t_2^{-1})^{-1}H_2 & \ldots & u\overline{u} A_{2n} \\
\vdots & \ddots & \ddots& \vdots \\
u\overline{u} A_{n1} & u\overline{u} A_{n2} & \ldots & u\overline{u}(1-t_n)^{-1}(1-t_n^{-1})^{-1}H_n
\end{bmatrix}.$$
Since $H_i=(1-t_i^{-1})(A_{ii}-t_iA_{ii}^T)$, the diagonal blocks of $H$ can be rewritten as $u\overline{u} (1-t_i)^{-1} (A_{ii}-t_iA_{ii}^T)$. Using the equation $A_{ij}=A_{ji}^T$, we see that a C-complex matrix for $L$ is given by
\begin{equation}
\label{eq:BoundaryComput}
H=u\overline{u} (I_{2g}-\tau)^{-1} (A-\tau A^T).
\end{equation}
It follows that $H^T=u \overline{u}(A^T-A\tau )(I_{2g}-\tau)^{-1}$. Since $u$ is a unit of $\Lambda_S$ and $(I_{2g}-\tau)^{-1}$ is an automorphism of $\Lambda_S^{2g}$, the module presented by $\overline{H}=H^T$ is canonically isomorphic to the module presented by $A\tau -A^T$. As the isomorphism is induced by the identity of $\Lambda_S^{2g}$, we shall slightly abuse notations and consider these modules as ``equal", see the second left vertical arrow in~(\ref{eq:BoundaryDiagram}).
%(Think in terms of cokernels!)

Next, we claim that $\Lambda_S^{2g}/\overline{H} \Lambda_S^{2g}$ is $\Lambda_S$-torsion. Band clasp trivially $F_1$ with $F_2$, $F_2$ with $F_3$, $F_i$ with $F_{i+1}$ and finally $F_{n-1}$ with $F_n$. The result is a link $L'$ which bounds a connected C-complex $F'$ for which the associated C-complex matrix is also $H$. Since $L$ has pairwise vanishing linking numbers, $L'$ does not. Consequently, using the Torres formula, the Alexander polynomial of $L'$ is non-zero and thus its Alexander module is torsion. As we saw in Remark \ref{rem:LambdaSPresentation}, if a C-complex matrix $H$ arises from a \emph{connected} C-complex, $\overline{H}$ presents the $\Lambda_S$-localized Alexander module. Thus $\overline{H}$ presents the torsion module $H_1(X_{L'};\Lambda_S)$ and the claim follows.

 Now consider the following diagram: 
\begin{equation}
\label{eq:BoundaryDiagram}
\xymatrix@R0.4cm@C0.2cm{TH_1(X_L;\Lambda_S)\times TH_1(X_L;\Lambda_S)\ar[rrrrrrrrrrr]^-{\op{Bl}(L)}\ar[dd]^\cong 
%\ar[drrr]
&&&&&&&&&&&&Q/\Lambda_S\ar[dd]^=\\
%&&&&&&&&&\Delta_K(t)^{-1}\mathbb{Z}[t^{\pm 1}]/\mathbb{Z}[t^{\pm 1}] \ar@{^(->}[drrr]\ar[urrr]
 \\
 %%%%%%%%%%%%%%%%%%%
 \frac{\Lambda_S^{2g}}{\overline{H}\Lambda_S^{2g}} \times   \frac{\Lambda_S^{2g}}{\overline{H}\Lambda_S^{2g}} \ar[rrrrrrrrrrr]^-{(a,b) \mapsto -a^T H^{-1}\overline{b}}\ar[dd]^{=}
  &&&&&&&&&&&&Q/\Lambda_S\ar[dd]^=\\\\
  %%%%%%%%%%%%%%%%%%%%%%%%%%%%
\frac{\Lambda_S^{2g}}{(A\tau -A^T)\Lambda_S^{2g}} \times  \frac{\Lambda_S^{2g}}{(A\tau -A^T)\Lambda_S^{2g}}  \ar[rrrrrrrrrrr]^-{(a,b) \mapsto a^T (u\overline{u})^{-1}(A-\tau A^T)^{-1}(\tau-I_{2g})\overline{b}}\ar[dd]^{(a,b)\mapsto (u^{-1}a,u^{-1}b)} 
&&&&&&&&&&&&Q/\Lambda_S\ar[dd]^=\\\\
%%%%%%%%%%%%%%%%%%%%%%%%%%%5
\frac{\Lambda_S^{2g}}{(A\tau-A^T)\Lambda_S^{2g}}  \times  \frac{\Lambda_S^{2g}}{(A\tau -A^T)\Lambda_S^{2g}}  \ar[rrrrrrrrrrr]^-{(a,b) \mapsto a^T (A-\tau A^T)^{-1}(\tau-I_{2g})\overline{b}}
 &&&&&&&&&&&&Q/\Lambda_S.\\
}
\end{equation}
The top square commutes by Theorem~\ref{thm:main}. Note that Proposition \ref{prop:form} ensures that $\lambda_H(a,b)=-a^T H^{-1} \overline{b}$: indeed we argued above that $\Lambda_S^{2g}/\overline{H}\Lambda_S^{2g}$ is torsion. The middle rectangle, commutes thanks to (\ref{eq:BoundaryComput}). Finally, the commutativity of the bottom square follows from a direct computation.
\end{proof}

%\pagenumbering{Roman}
\appendix

\section{Proof of Lemma~\ref{lem:BigDiagram}}

For the reader's convenience, we recall both the set-up and the statement of Lemma~\ref{lem:BigDiagram}. Given a commutative ring $R$, consider the following commutative diagram of cochain complexes of $R$-modules whose columns and rows are assumed to be exact:
\begin{equation}
\label{eq:NineAppendix}
\xymatrix@R0.5cm{
  &0 \ar[d]&  0\ar[d]&  0\ar[d]&  \\
0 \ar[r] &A\ar[d] \ar[r]& B \ar[d]_{v_B}\ar[r]^{h_B}& C \ar[r]\ar[d]& 0 \\
0 \ar[r] &D \ar[r]^{h_D}\ar[d]_{v_D}& E \ar[r]\ar[d]_{v_E}& F \ar[r]\ar[d]& 0 \\
0 \ar[r] &H \ar[r]^{h_H}\ar[d]& J \ar[r]^{h_J}\ar[d]& K \ar[r]\ar[d]& 0  \\
  &0 &  0&  0.& 
}\end{equation} 
If $0 \rightarrow S \rightarrow T \rightarrow U \rightarrow 0$ is one of the short exact sequences of cochain complexes in~(\ref{eq:NineAppendix}), we shall use~$\delta_{U}^v$ (resp.~$\delta_{U}^h$) to denote the connecting homomorphism $H^*(U) \rightarrow H^{*+1}(S)$ if the sequence is depicted vertically (resp. horizontally). For instance, there are connecting  homomorphisms $\delta_K^v \colon H^*(K) \to H^{*+1}(C)$ and $\delta_K^h \colon H^*(K) \to H^{*+1}(H)$.

Just as in Section~\ref{sec:ProofMain}, we use the same notation for cochain maps as for the homomorphisms they induce on cohomology. Furthermore, we shall write $H^*(D) \to H^*(J)$ for the homomorphism induced by any composition of the cochain maps from $D$ to $J$. Also, $H^*(J) \to H^{*+1}(C)$ will denote the composition of the connecting homomorphism $\delta^h_J \colon H^*(J) \to H^{*+1}(B)$ with the homomorphism $h_B \colon H^*(B) \to H^*(C)$. Alternatively, the latter map can also be described as the composition of the homomorphism induced by the cochain map $h_J \colon H^*(J) \to H^*(K)$ with the connecting homomorphism $\delta_K^v \colon H^*(K) \to H^{*+1}(C)$. 

Furthermore, as we argued in the discussion leading to the statement of Lemma~\ref{lem:BigDiagram}, the connecting homomorphism $\delta_K^v$ induces a well-defined map $\frac{ H^{*-1}(K)}{\ker(\delta_K^h)} \to \frac{H^*(C)}{\im( H^{*-1}(J) \to  H^*(C))}$, which we also denote by $\delta_K^v$. Additionally, there are well-defined homomorphisms $(\delta_K^h)^{-1}$ and~$v_B^{-1}$, whose definitions we now recall, referring to Section~\ref{sec:ProofMain} for details.
\begin{enumerate}
\item There is a homomorphism $(\delta_K^h)^{-1}$ from $v_D \ker( H^*(D)  \to H^*(J))$ to $ H^{*-1}(K)/\ker(\delta_K^h)$. More precisely, $(\delta_K^h)^{-1}(v_D([x]))$ is defined as the class of $[k]$ in $H^{*-1}(K)/\ker(\delta_K^h)$ for any $[k]$ in $H^{*-1}(K)$ such that $\delta_K^h([k])=v_D([x])$.
\item There is a homomorphism $v_B^{-1}$ from  $h_D \ker(  H^*(D) \to H^*(J))$ to $\frac{ H^*(B)}{\ker(v_B)}$. More precisely, $v_B^{-1}(h_D([x]))$ is defined as the class of $[b]$ in $\frac{H^*(B)}{\ker(v_B)}$ for any $[b]$ in $H^*(B)$ such that $v_B([b])=h_D([x])$.
\end{enumerate}
The aim of this appendix is to prove Lemma~\ref{lem:BigDiagram} which states that the following diagram anticommutes:
\begin{equation}
\label{eq:9Lemma}
 \xymatrix@R0.5cm{
\ker(  H^m(D)  \to  H^m(J)) \ar[r]^{  v_D}\ar[d]^{h_D} & v_D \ker(  H^m(D)  \to  H^m(J)) \ar[d]^{(\delta_K^h)^{-1}}\\
%%%%%%%%%%%%%%%
h_D \ker(  H^m(D)  \to  H^m(J))\ar[d]^{v_B^{-1}} & \frac{H^{m-1}(K)}{\ker(\delta_K^h)}\ar[d]^{\delta_K^v} \\
\frac{ H^m(B)}{\ker(v_B)} \ar[r]^{h_B}& \frac{ H^m(C)}{\im(H^{m-1}(J) \to  H^m(C))}.
}
\end{equation}
\begin{proof}[Proof of Lemma~\ref{lem:BigDiagram}]
The proof is structured as follows: first, we set up some notation, then we compute $h_B \circ v_B^{-1} \circ h_D$ and, finally, we show that $\delta_K^v \circ (\delta_K^h)^{-1}\circ v_D $ yields the same result.

Our first task is to write out explicitly the short exact sequences of cochain complexes displayed in~(\ref{eq:NineAppendix}). We restrict our attention to the degrees of interest (namely $m$ and $m-1$) and omit the trivial modules which ought to appear on the extremities of the exact rows and columns. The result is the following commuting cube of $R$-modules in which the rows and columns are exact:
\begin{equation}
\label{eq:Cube}
 \xymatrix@R0.5cm{
A^{m-1} \ar[rr]\ar[rd] \ar[dd]&& B^{m-1} \ar[rr]\ar[rd] \ar[dd]|\hole&& C^{m-1}\ar[dd]|\hole\ar[rd] & \\
&A^m \ar[rr]\ar[dd] &&  B^m \ar[rr]\ar[dd]&&  C^m\ar[dd]  \\
D^{m-1} \ar[rr]|\hole\ar[rd] \ar[dd] && E^{m-1} \ar[dd]|\hole \ar[rr]|\hole\ar[rd] && F^{m-1}\ar[dd]|\hole \ar[rd]  & \\
&D^m \ar[rr]\ar[dd]&& E^m \ar[rr]\ar[dd]&& F^m\ar[dd]  \\
H^{m-1} \ar[rr]|\hole\ar[rd] && J^{m-1}\ar[rd] \ar[rr]|\hole&& K^{m-1}\ar[rd] & \\
&H^m \ar[rr] && J^m \ar[rr]&& K^m. \\}
\end{equation}
Although the maps of this cube are not labeled, we systematically use the following conventions. Firstly, the codifferential of a cochain complex $T$ will be denoted by $\delta_T$. Secondly, cochain maps are indexed by their domain and are named $h$ (resp. $v$) if they are horizontal (resp. vertical). For instance, in the lower right corner of~(\ref{eq:Cube}), the horizontal map is denoted by $h_J$, the vertical map is denoted by $v_F$ and the diagonal map is denoted by~$\delta_K$. Additionally, recall that we use the same notation for cochain maps as for the homomorphisms they induce on cohomology.

Next, we move on to the second step of the proof: since our goal is to show that the equality
$$ h_B \circ v_B^{-1} \circ h_D([x])=- \delta_K^v \circ (\delta_K^h)^{-1}\circ v_D([x])$$
holds for all $[x] \in \ker(H^m(D) \to H^m(J))$, we now describe the map $h_B \circ v_B^{-1} \circ h_D$. Let $x \in D^m$ be a cocycle representing a class $[x] \in \ker(H^m(D) \to H^m(J))$. As we saw in Section~\ref{sec:ProofMain}, there exists $[b]$ in $H^m(B)$ such that $v_B([b])=h_D([x])$. Fixing once and for all such a $[b]$, the definition of $v_B^{-1}$ implies that $v_B^{-1} \circ h_D([x])$ is equal to the class of $[b]$ in $\frac{H^m(B)}{\ker(v_B)}$. We deduce that $h_B \circ v_B^{-1} \circ h_D([x])=h_B([b])$. 

To carry out the third step of the proof, we must compute $\delta_K^v \circ (\delta_K^h)^{-1}\circ v_D([x])$. Consequently, we briefly recall the definition of connecting homomorphisms.

\begin{remark}
\label{rem:Connecting}
Given a short exact sequence $0 \to S \stackrel{j}{\to} T \stackrel{\pi}{\to} U \to 0$ of cochain complexes, the connecting homomorphisms $\delta_{\text{conn}} \colon H^m(U) \to H^{m+1}(S)$ are defined as follows. Since $\pi$ is surjective, pick any $t \in T^m$ such that $\pi(t)=u$ is a cocycle representing a cohomology class~$[u]$ in $H^m(U)$, and set $\delta_{\text{conn}} ([u]):=[s]$, where $s \in S^{m+1}$ is the (unique) cocycle satisfying $j(s)=\delta_T(t)$. It is well known that $\delta_{\text{conn}}$ is well-defined.
\end{remark}
In order to compute $\delta_K^v \circ (\delta_K^h)^{-1}\circ v_D([x])$, we first compute $(\delta_K^h)^{-1}\circ v_D([x])$. Since $h_D([x])-v_B([b])$ vanishes in cohomology (by definition of $[b]$), there is a cochain $e\in E^{m-1}$ such that~$\delta_E (e)=h_D(x)-v_B(b)$.

\begin{claim}
The class of $h_J(v_E([e]))$ in $\frac{H^{m-1}(K)}{\ker(\delta^h_K)}$ is equal to $(\delta_K^h)^{-1} \circ v_D([x])$.
\end{claim}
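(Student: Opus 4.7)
The plan is to verify the claim by direct diagram chasing, unwinding the definition of the connecting homomorphism $\delta_K^h$ recalled in Remark~\ref{rem:Connecting} and showing that $\delta_K^h$ sends the class of $h_J(v_E(e))$ to $v_D([x])$. Once this is done, the definition of $(\delta_K^h)^{-1}$ from item (1) at the beginning of the appendix yields the claim.

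First I would check that $h_J(v_E(e)) \in K^{m-1}$ is a cocycle, so that it represents an element of $H^{m-1}(K)$. Since $h_J$ and $v_E$ are cochain maps and $\delta_E(e) = h_D(x) - v_B(b)$ by construction, one has
\[
\delta_K\bigl(h_J(v_E(e))\bigr) \,=\, h_J\bigl(v_E(h_D(x))\bigr) - h_J\bigl(v_E(v_B(b))\bigr).
\]
The second summand vanishes because $v_E \circ v_B = 0$ by exactness of the middle column of~(\ref{eq:NineAppendix}). For the first summand, the commutativity of the square involving $D,E,H,J$ gives $v_E \circ h_D = h_H \circ v_D$, so this term equals $h_J(h_H(v_D(x))) = 0$ since $h_J \circ h_H = 0$ by exactness of the bottom row.

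Next, I would compute $\delta_K^h\bigl([h_J(v_E(e))]\bigr)$ following the snake-type recipe of Remark~\ref{rem:Connecting}, applied to the short exact sequence $0 \to H \to J \to K \to 0$. Taking the preimage $j := v_E(e) \in J^{m-1}$ of $k := h_J(v_E(e))$ under $h_J$, one computes
\[
\delta_J(j) \,=\, v_E(\delta_E(e)) \,=\, v_E\bigl(h_D(x) - v_B(b)\bigr) \,=\, v_E(h_D(x)) \,=\, h_H(v_D(x)),
\]
using once more that $v_E \circ v_B = 0$ and the commutativity $v_E \circ h_D = h_H \circ v_D$. By exactness of the bottom row, $h_H$ is injective, so the unique element of $H^m$ whose image under $h_H$ is $\delta_J(j)$ is $v_D(x)$, which is automatically a cocycle (since $h_H$ is injective and $\delta_H \circ h_H = h_H \circ \delta_H$, no wait, since $h_H$ is a chain map and $h_H \delta_H(v_D(x)) = \delta_J h_H(v_D(x)) = \delta_J(\delta_J(j)) = 0$). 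Therefore $\delta_K^h\bigl([h_J(v_E(e))]\bigr) = [v_D(x)] = v_D([x])$, and by the definition of $(\delta_K^h)^{-1}$ recalled above, the class of $h_J(v_E(e))$ in $H^{m-1}(K)/\ker(\delta_K^h)$ equals $(\delta_K^h)^{-1}(v_D([x]))$, proving the claim.

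The main obstacle is purely notational: the 3-by-3 diagram~(\ref{eq:NineAppendix}) supplies many commuting squares and two families of compositions vanishing by exactness ($v_E \circ v_B = 0$ and $h_J \circ h_H = 0$), and the bookkeeping must be carried out consistently. Once the relevant squares are identified, the verification is essentially forced.
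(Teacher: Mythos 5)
Your proposal is correct and follows essentially the same diagram chase as the paper: you verify that $h_J(v_E(e))$ is a cocycle and that $\delta_J(v_E(e))=h_H(v_D(x))$, then invoke the definitions of $\delta_K^h$ and $(\delta_K^h)^{-1}$, exactly as in the appendix. The only (immaterial) difference is in the cocycle check, where you route $h_J(v_E(h_D(x)))$ through $h_J\circ h_H=0$ in the bottom row, whereas the paper routes $\delta_K(h_J(v_E(e)))$ through $v_F\circ h_E$ and the exactness of the rows and middle column.
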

\begin{proof} 
By definition of $(\delta_K^h)^{-1}$, it is enough to verify that $\delta_K^h(h_J(v_E([e])))=v_D([x])$. To check this, recall from Remark~\ref{rem:Connecting} that we must show that $h_J(v_E(e))$ is a cocyle and that $\delta_J(v_E(e))=h_H(v_D(x))$. To check that $h_J(v_E(e))$ is indeed a cocycle, we use successively the commutativity of~(\ref{eq:NineAppendix}), the definition of $e$ and the exactness of the lines in~(\ref{eq:NineAppendix}) to get
$$\delta_K(h_J(v_E(e)))=v_F(h_E(\delta_E(e)))=v_F(h_E(h_D(x)-v_B(b)))=-v_F(h_E(v_B(b))).$$
Using once again the commutativity of~(\ref{eq:NineAppendix}) and the exactness of its lines, we deduce the desired result, namely that
$$\delta_K(h_J(v_E(e)))=-v_F(h_E(v_B(b)))=-h_J(v_E(v_B(b)))=0.$$
 Next, we check the equality $\delta_J(v_E(e))=h_H(v_D(x))$. This verification is carried out by using successively the commutativity of~(\ref{eq:Cube}), the definition of $e$, the exactness of the columns in~(\ref{eq:Cube}) and the commutativity of~(\ref{eq:Cube}):
 $$\delta_J(v_E(e))=v_E(\delta_E(e))=v_E(h_D(x))-v_E(v_B(b))=v_E(h_D(x))=h_H(v_D(x)).$$
This concludes the proof of the claim.
\end{proof}
 
The conclusion of the lemma will promptly follow from the next claim:

\begin{claim}
The class of $-h_B([b])$ in $\frac{ H^m(C)}{\im(H^{m-1}(J) \to  H^m(C))}$ is equal to $\delta_K^v \circ (\delta_K^h)^{-1} \circ v_D([x])$.
\end{claim}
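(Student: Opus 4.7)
The plan is to compute $\delta_K^v([h_J(v_E(e))])$ directly from the cocycle-level definition of the connecting homomorphism recalled in Remark~\ref{rem:Connecting}, where $e \in E^{m-1}$ is the cochain fixed in the proof of the previous claim satisfying $\delta_E(e) = h_D(x) - v_B(b)$. Since the first claim has identified $h_J(v_E(e))$ as a representative of $(\delta_K^h)^{-1}(v_D([x]))$, all that remains is to track this particular representative through the connecting map associated to the third column of~(\ref{eq:NineAppendix}).

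The key step is a short diagram chase in the cube~(\ref{eq:Cube}). First, I would lift $h_J(v_E(e)) \in K^{m-1}$ through the surjection $v_F \colon F^{m-1} \to K^{m-1}$. The natural candidate is $h_E(e) \in F^{m-1}$: commutativity of the upper face of~(\ref{eq:Cube}) gives $v_F(h_E(e)) = h_J(v_E(e))$. Next, using that $h_E$ is a cochain map,
\[
 \delta_F(h_E(e)) \;=\; h_E(\delta_E(e)) \;=\; h_E(h_D(x)) - h_E(v_B(b)).
\]
The first term vanishes by exactness of the middle row of~(\ref{eq:NineAppendix}), and commutativity of the top face of~(\ref{eq:Cube}) rewrites the second as $v_C(h_B(b))$. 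Hence $\delta_F(h_E(e)) = -v_C(h_B(b))$, and since $-h_B(b) \in C^m$ is a cocycle (because $b$ is), the definition of $\delta_K^v$ yields $\delta_K^v([h_J(v_E(e))]) = [-h_B(b)] = -h_B([b])$ in $H^m(C)$, and a fortiori in the quotient by $\im(H^{m-1}(J) \to H^m(C))$.

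Combining with the identification $h_B \circ v_B^{-1} \circ h_D([x]) = h_B([b])$ established in the second step of the proof, we obtain
\[
 \delta_K^v \circ (\delta_K^h)^{-1} \circ v_D([x]) \;+\; h_B \circ v_B^{-1} \circ h_D([x]) \;=\; 0
\]
in $\frac{H^m(C)}{\im(H^{m-1}(J) \to H^m(C))}$, which is exactly the anticommutativity of~(\ref{eq:9Lemma}). The argument is essentially a bookkeeping exercise, and the crucial minus sign producing \emph{anti}commutativity rather than commutativity originates from the sign in the defining relation $\delta_E(e) = h_D(x) - v_B(b)$. I do not anticipate any serious obstacle beyond tracking the chosen cochain representatives through the cube and keeping straight which occurrences of $\delta$ refer to codifferentials and which to connecting homomorphisms.
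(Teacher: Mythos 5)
Your argument is correct and follows essentially the same route as the paper: you lift $h_J(v_E(e))$ through $v_F$ via $h_E(e)$, compute $\delta_F(h_E(e)) = -v_C(h_B(b))$ using that $h_E h_D = 0$ and $h_E v_B = v_C h_B$, and read off $\delta_K^v([h_J(v_E(e))]) = [-h_B(b)]$ from the definition of the connecting homomorphism, exactly as in the paper (which likewise observes the equality holds before passing to the quotient). The only cosmetic difference is your labeling of which face of the cube supplies $v_F h_E = h_J v_E$; the identity itself is the one the paper invokes.
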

\begin{proof}
We will show that the announced equality holds without having to pass to the quotient. To check this assertion, recall from Remark~\ref{rem:Connecting} that we must find a cochain $f \in F^{m-1}$ such that $v_F(f)$ is a cocycle representing $(\delta_K^h)^{-1} \circ v_D([x])$; furthermore $f$ must also satisfy $\delta_F(f)=-v_C(h_B(b))$. 

We claim that $h_E(e)$ can be taken to play the role of $f$. We first check that $h_E(e)$ is such that $v_F(h_E(e))$ is a cocycle representing $(\delta_K^h)^{-1} \circ v_D([x])$. Since we proved in the previous claim that $(\delta_K^h)^{-1} \circ v_D([x])$ is (the class of) the cohomology class of $h_J(v_E(e))$, it is actually enough to show that $v_F(h_E(e))= h_J(v_E(e))$. This is immediate from the commutativity of~(\ref{eq:Cube}). Finally, we show that $\delta_F(h_E(e))=-v_C(h_B(b))$. This follows from the commutativity of~(\ref{eq:Cube}), the definition of $e$, the exactness of the rows in~(\ref{eq:Cube}), and the commutativity of~(\ref{eq:Cube}):
$$ \delta_F(h_E(e))=h_E(\delta_E(e))=h_E(h_D(x))-h_E(v_B(b)) =-h_E(v_B(b)) =-v_C(h_B(b)).$$
This concludes the proof of the claim.
\end{proof}
Summarizing, we have just shown that $-[h_B(b)]$ represents $\delta_K^v \circ (\delta_K^h)^{-1} \circ v_D([x])$. Since the second step of the proof consisted in showing that $h_B([b])$ represents $h_B \circ v_B^{-1} \circ h_D([x])$, the proof of the lemma is concluded.
\end{proof}

\bibliographystyle{plain}
%\nocite{*}
\bibliography{BiblioBlanchfield2}

\end{document}